\documentclass[12pt,reqno]{article}

\usepackage{amsmath,amssymb,amsfonts,amsthm}
\usepackage{mathtools}
\usepackage{bm}
\usepackage[margin=1.25in]{geometry}

\usepackage{hyperref}

\usepackage{hyperref}
\usepackage[nameinlink,capitalise]{cleveref}
\usepackage{orcidlink}

\newcommand{\doi}[1]{\href{https://doi.org/#1}{doi: #1}}

\newtheorem{theorem}{Theorem}[section]
\newtheorem{proposition}[theorem]{Proposition}
\newtheorem{lemma}[theorem]{Lemma}
\newtheorem{corollary}[theorem]{Corollary}

\theoremstyle{definition}
\newtheorem{definition}[theorem]{Definition}

\theoremstyle{remark}
\newtheorem{remark}[theorem]{Remark}

\newcommand{\R}{\mathbb{R}}
\newcommand{\cX}{\mathcal{X}}
\newcommand{\cF}{\mathcal{F}}
\newcommand{\cG}{\mathcal{G}}
\newcommand{\cR}{\mathcal{R}}
\newcommand{\cO}{\mathcal{O}}
\newcommand{\D}{\mathrm{D}}

\title{Higher Order Structure along Nilpotent Eigendirections
in Planar Vector Fields}

\author{
Roberto Albarran-Garc\'ia\orcidlink{0009-0006-4591-7316}\\
\small Departamento de Matem\'aticas, UAM--Iztapalapa,\\
\small 09310 Iztapalapa, Mexico City, Mexico\\
\small \texttt{albarrangr74@live.com.mx}
\and
Martha Alvarez-Ram\'irez\orcidlink{0000-0001-9187-1757}\\
\small Departamento de Matem\'aticas, UAM--Iztapalapa,\\
\small 09310 Iztapalapa, Mexico City, Mexico\\
\small \texttt{mar@xanum.uam.mx}
\and
Marco Polo Garc\'ia-Rivera\orcidlink{0009-0005-4774-3383}\\
\small Facultad de Ingenier\'ia, Universidad La Salle,\\
\small Mexico City, Mexico\\
\small \texttt{mpgr86@gmail.com}
}

\date{}
\begin{document}

\maketitle

\begin{abstract}
We study a distinguished one dimensional restriction associated with
planar vector fields whose linearization is a nonzero nilpotent matrix
of index two. This restriction admits an intrinsic interpretation on
the nilpotent eigendirection, while its scalar representation in Jordan
coordinates satisfies a simple transformation law under admissible
changes of Jordan chain. Its quadratic Taylor coefficient coincides
with the intrinsic quadratic Bogdanov--Takens coefficient.
For rational restrictions, we show that the Taylor coefficient
sequence satisfies a finite linear recurrence determined by the
minimal denominator. Its degree defines a recurrence degree that is
invariant under admissible changes of Jordan chain and gives the
minimal order of an eventual homogeneous recurrence. Exact
cancellations may therefore be detected directly from the restriction
before nonlinear normal form transformations are performed.
Applications to four planar models exhibit different recurrence degree
and cancellation mechanisms, illustrating the complementarity between
the distinguished restriction and smooth nilpotent normal form theory.
\end{abstract}

\newpage

\noindent\textbf{Keywords.}
Nilpotent singularity, Bogdanov--Takens bifurcation,
normal form, Jordan chain, Taylor coefficients, degenerate bifurcation.

\medskip

\noindent\textbf{2020 Mathematics Subject Classification.}
34C20, 34C23, 37G05, 37G10.

\section{Introduction}
\label{sec:introduction}

Nilpotent singularities constitute one of the fundamental local
configurations in the theory of planar vector fields. In the simplest
nontrivial case, the linearization at an equilibrium is a nonzero
nilpotent matrix of index two. After introducing coordinates adapted
to a Jordan chain, the vector field can be written as
\begin{equation}
\label{eq:intro-nilpotent}
\dot u=v+\mathcal F(u,v),
\qquad
\dot v=\mathcal G(u,v),
\end{equation}
where
\[
\mathcal F(u,v),\mathcal G(u,v)
=
\mathcal O\bigl(\|(u,v)\|^2\bigr).
\]
This setting underlies the local analysis of Bogdanov--Takens
singularities and their higher-codimension degeneracies; see, among
others, \cite{Takens1974,Bogdanov1981,GuckenheimerHolmes1983,
Kuznetsov2004,Kuznetsov2005}. Related classes of planar nilpotent systems under additional
structural assumptions, including reversibility and symmetry, have
also been investigated; see, for instance,
\cite{CorberaValls2024x,CorberaValls2024y}.

The classical normal form approach proceeds by successive
near--identity transformations, and possibly time reparametrizations,
that remove nonessential terms and isolate the coefficients relevant
to the local classification. These procedures are particularly
effective for determining the codimension of degenerate nilpotent
singularities, but in doing so they combine information already
present in the linearly reduced vector field with contributions
generated along the way, during nonlinear normalization.

Recently, Castellanos and Chan--L\'opez
\cite{CastellanosChanLopez2026} obtained an intrinsic
characterization of the two quadratic Bogdanov--Takens normal form
coefficients in the planar case in terms of directional derivatives
of the determinant and trace of the Jacobian along the nilpotent
eigendirection. Chan--L\'opez subsequently extended this approach to
arbitrary dimension~\cite{ChanLopez2026Rn}, expressing the quadratic
coefficients through directional derivatives of characteristic
invariants. These results provide a direct description of the
quadratic Bogdanov--Takens data from the variation of the
linearization along the kernel direction.

Here we take a complementary route. We study
higher order information that is already present along the nilpotent
eigendirection before nonlinear normal form transformations combine
pure and mixed terms. For a planar system of the
form~\eqref{eq:intro-nilpotent}, we consider the scalar restriction
\begin{equation}
\label{eq:intro-restriction}
\mathcal R_{\mathcal X}(u):=\mathcal G(u,0).
\end{equation}
Its Taylor coefficients are precisely the coefficients associated
with the pure \(u\)-monomials in the second component of the linearly
reduced vector field. The quadratic coefficient coincides with the
intrinsic Bogdanov--Takens coefficient usually denoted by \(a\),
whereas the coefficients of degree three and higher retain additional
information about the restriction before nonlinear normalization.

The scalar function~\eqref{eq:intro-restriction} admits a useful
intrinsic interpretation. If \(A=D\mathcal X(0)\) and
\(K=\ker A=\operatorname{im}A\), the local map
\[
\rho_{\mathcal X}(\xi):=A\mathcal X(\xi),
\qquad \xi\in K
\]
sufficiently close to the origin, takes values in \(K\) and is defined
without choosing a Jordan chain. After a generator of \(K\) is fixed,
\(\mathcal R_{\mathcal X}\) is the corresponding scalar
representation of this map, keeping the
underlying one-dimensional object separate from the coordinates used to
represent it.

The higher order information considered here is therefore understood
at the level of the linearly reduced vector field. We study the Taylor
coefficient sequence of \(\mathcal R_{\mathcal X}\) and the algebraic
relations among its terms, modulo admissible changes of Jordan chain,
with no claim that the coefficients of degree three and higher remain
invariant under arbitrary nonlinear changes of phase coordinates.
Keeping these two notions of equivalence apart is what makes it
possible, later on, to compare the distinguished restriction
meaningfully with a smooth nilpotent normal form.

The viewpoint developed here was initially suggested by an exact
rational identity arising in a modified Leslie--Gower model, where the
complete Taylor coefficient sequence of the distinguished restriction
can be obtained without an order-by-order expansion
\cite{AlbarranAlvarezGarcia2026}. The present work develops the
underlying mechanism for general planar nilpotent vector fields.

We first determine how the scalar representation changes under an
admissible change of Jordan chain. If two chains are related by the
usual rescaling of the kernel direction, their restrictions satisfy
\[
\widetilde{\mathcal R}_{\mathcal X}(\widetilde u)
=
\frac{1}{a}
\mathcal R_{\mathcal X}(a\widetilde u),
\qquad a\neq0.
\]
So although the individual Taylor coefficients depend on the
normalization of the generator of \(K\), their vanishing pattern and
the order of the restriction do not: both are independent of the
chosen Jordan chain.

For rational distinguished restrictions,
\[
\mathcal R_{\mathcal X}(u)
=
u^2\frac{P(u)}{Q(u)},
\qquad Q(0)\neq0,
\]
the Taylor coefficient sequence satisfies a finite linear recurrence
determined by the denominator \(Q\). After common factors of \(P\)
and \(Q\) are cancelled, the degree of the minimal denominator defines
the recurrence degree of the restriction. We prove that this degree is
independent of the admissible Jordan chain and is the minimal order of
an eventual homogeneous linear recurrence with constant coefficients
satisfied by the Taylor coefficient sequence. Thus a finite algebraic
object controls the recurrence structure of an infinite Taylor
sequence.

The case of a linear denominator admits a particularly explicit
description. If
\[
\mathcal R_{\mathcal X}(u)
=
Cu^2\frac{a+bu}{d-eu},
\]
then every Taylor coefficient of degree at least three contains the
common factor $ae+bd.$
Its vanishing produces an exact numerator--denominator cancellation
and makes the distinguished restriction quadratic. More generally,
cancellations in the reduced rational expression are reflected in
changes of the recurrence degree.

These statements concern the distinguished restriction, not the full
nonlinear nilpotent normal form. Mixed monomials disappear when
\(v=0\) is imposed, but they may contribute to normal form
coefficients through the homological equations associated with
nonlinear near--identity transformations, so exact recurrences or
all-orders cancellations in \(\mathcal R_{\mathcal X}\) need not
correspond to degeneracies of the same order in a smooth nilpotent
normal form. The two constructions retain different, complementary
parts of the local information.

The general results are illustrated by four planar models. In the
mosquito population model~\cite{HuangRuanYuZhang2019}, the generic
recurrence degree is two and drops to one on an explicit algebraic
locus where the denominator factors coalesce and a common factor
cancels. In a modified Leslie--Gower model~\cite{ZhaoZhao2026}, the
recurrence degree drops from one to zero when an exact cancellation
makes the distinguished restriction quadratic. In a second modified
Leslie--Gower model~\cite{WangWuZou2026}, the recurrence degree remains
one throughout the admissible parameter region because the
corresponding cancellation factor is strictly positive. Finally, for
the Bazykin model considered in~\cite{CastellanosChanLopez2026}, the
recurrence degree remains two even at the parameter value where the
second intrinsic quadratic Bogdanov--Takens coefficient vanishes and
the two denominator factors coalesce. The four examples therefore
exhibit distinct relations between denominator structure,
cancellation, and normal form degeneracy.

The paper is organized as follows.
Section~\ref{sec:nilpotent} introduces the intrinsic map on the
nilpotent eigendirection and its scalar distinguished restriction.
Section~\ref{sec:invariance} determines the transformation law under
changes of Jordan chain and identifies the resulting invariant
properties. Section~\ref{sec:rational} develops the theory of rational
restrictions, minimal denominators, and finite recurrence relations.
Section~\ref{sec:normalforms} clarifies the relation between the
distinguished restriction and smooth nilpotent normal forms.
Section~\ref{sec:applications} applies the theory to four planar
models and compares the resulting recurrence mechanisms.

\section{Nilpotent singularities and distinguished restrictions}
\label{sec:nilpotent}

We begin by introducing the distinguished restriction for a planar
vector field with a nilpotent singularity. The construction is first
formulated directly on the one-dimensional kernel of the linearization
and is then related to its scalar representation in Jordan coordinates.
This separates the underlying geometric object from the choice of a
Jordan chain and identifies the Taylor coefficients that will be
studied in the subsequent sections.

Consider
\begin{equation}
\label{eq:general-system}
\dot z=\cX(z),
\qquad z\in\R^2,
\end{equation}
where \(\cX\) is a \(C^r\) vector field, \(r\geq2\), defined in a
neighborhood of the origin. We assume throughout this section that
$\cX(0)=0$ and that
\[
A:=\D\cX(0)
\]
is a nonzero nilpotent matrix of index two:
\begin{equation}
\label{eq:nilpotent-assumption}
A\neq0,
\qquad
A^2=0.
\end{equation}

Since \(A\) is a nonzero nilpotent \(2\times2\) matrix,
\[
\operatorname{rank}A=1,
\qquad
\operatorname{im}A=\ker A.
\]
Set
\[
K:=\ker A.
\]
Thus \(K\) is the one-dimensional eigendirection associated with the
zero eigenvalue.

The following interpretation of the distinguished restriction was
suggested to us by  Chan--L\'opez. Consider the map
\begin{equation}
\label{eq:intrinsic-restriction}
\rho_{\cX}:K\longrightarrow K,
\qquad
\rho_{\cX}(\xi):=A\cX(\xi).
\end{equation}
This map is well defined because
\(A\cX(\xi)\in\operatorname{im}A=K\) for every \(\xi\in K\)
sufficiently close to the origin. In particular, \(\rho_{\cX}\) is
defined directly on the distinguished eigendirection and does not
require the choice of a generalized eigenvector.

\begin{definition}
\label{def:distinguished-restriction}
Choose \(q_0\neq0\) spanning \(K\). Since
\(K=\operatorname{span}\{q_0\}\), there is a unique scalar function
\(\cR_{\cX}\), defined near the origin, such that
\begin{equation}
\label{eq:scalar-representation}
\rho_{\cX}(u q_0)
=
\cR_{\cX}(u)q_0.
\end{equation}
We call \(\cR_{\cX}\) the \emph{distinguished eigendirection
restriction} of \(\cX\) associated with the choice of \(q_0\).
\end{definition}

To recover its expression in Jordan coordinates, choose \(q_1\) such
that
\begin{equation}
\label{eq:jordan-chain}
Aq_0=0,
\qquad
Aq_1=q_0,
\end{equation}
and set $P=(q_0\ q_1).$ Introduce Jordan coordinates through
\begin{equation}
\label{eq:jordan-coordinates}
z=P
\begin{pmatrix}
u\\ v
\end{pmatrix}
=
uq_0+vq_1.
\end{equation}
Since
\[
P^{-1}AP=
\begin{pmatrix}
0&1\\
0&0
\end{pmatrix},
\]
system~\eqref{eq:general-system} becomes
\begin{equation}
\label{eq:nilpotent-system}
\dot u=v+\cF(u,v),
\qquad
\dot v=\cG(u,v),
\end{equation}
with
\[
\cF(u,v),\cG(u,v)
=
\cO\bigl(\|(u,v)\|^2\bigr).
\]

\begin{proposition}
\label{prop:scalar-jordan}
The scalar representation~\eqref{eq:scalar-representation} is given
in the Jordan coordinates~\eqref{eq:jordan-coordinates} by
\begin{equation}
\label{eq:distinguished-restriction}
\cR_{\cX}(u)=\cG(u,0).
\end{equation}
In particular, once \(q_0\) is fixed, \(\cR_{\cX}\) is independent
of the choice of the generalized eigenvector \(q_1\).
\end{proposition}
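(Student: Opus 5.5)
The plan is to compute $\rho_{\cX}(uq_0)=A\cX(uq_0)$ directly in the Jordan frame. First, rewrite the vector field in coordinates. Since $z=P(u,v)^T$, the transformed system~\eqref{eq:nilpotent-system} reads $(\dot u,\dot v)^T=P^{-1}\cX(P(u,v)^T)$. Equivalently,
\[
\cX(uq_0+vq_1)=\bigl(v+\cF(u,v)\bigr)q_0+\cG(u,v)\,q_1 .
\]
This identity is just the definition of the coordinates of $\cX$ in the basis $\{q_0,q_1\}$. It holds for all $(u,v)$ near the origin.

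Next, restrict to the kernel direction by setting $v=0$. This gives $\cX(uq_0)=\cF(u,0)\,q_0+\cG(u,0)\,q_1$. Applying $A$ and using the chain relations~\eqref{eq:jordan-chain}, $Aq_0=0$ and $Aq_1=q_0$, we get
\[
\rho_{\cX}(uq_0)=A\cX(uq_0)=\cF(u,0)\,Aq_0+\cG(u,0)\,Aq_1=\cG(u,0)\,q_0 .
\]
Since $q_0\neq0$, the scalar in Definition~\ref{def:distinguished-restriction} is unique. Comparing with~\eqref{eq:scalar-representation} therefore yields $\cR_{\cX}(u)=\cG(u,0)$.

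For the independence claim, note that the left-hand side $\rho_{\cX}(uq_0)=A\cX(uq_0)$ involves only $A$, $\cX$ and $q_0$. By Definition~\ref{def:distinguished-restriction}, $\cR_{\cX}$ is determined by $q_0$ alone. Hence, although $\cG$ itself depends on the choice of $q_1$ (for instance, through $q_1\mapsto q_1+cq_0$), the restriction $\cG(u,0)$ does not.

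There is no real obstacle here. The only point needing care is the first step: the second component of $P^{-1}\cX$ is exactly the $q_1$-coefficient of $\cX$ in the basis $\{q_0,q_1\}$. One must also confirm that $P^{-1}AP$ has the stated form, so that the linear part of the $q_0$-coefficient is $v$ and $\cF,\cG$ are of order two. This follows from $AP=(0\ q_0)=P\begin{pmatrix}0&1\\0&0\end{pmatrix}$.
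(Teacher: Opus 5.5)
Your proposal is correct and follows essentially the same route as the paper: write $\cX(uq_0)=\cF(u,0)\,q_0+\cG(u,0)\,q_1$, apply $A$ using $Aq_0=0$ and $Aq_1=q_0$, and compare with the defining relation~\eqref{eq:scalar-representation}. Your explicit argument for independence from $q_1$ (that $\rho_{\cX}(uq_0)=A\cX(uq_0)$ involves only $A$, $\cX$ and $q_0$) spells out what the paper leaves implicit.
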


\begin{proof}
Along \(K\), we have \(z=uq_0\). From
\eqref{eq:nilpotent-system},
\[
\cX(uq_0)
=
q_0\cF(u,0)+q_1\cG(u,0).
\]
Applying \(A\) and using
\[
Aq_0=0,
\qquad
Aq_1=q_0,
\]
gives
\[
A\cX(uq_0)
=
\cG(u,0)q_0.
\]
Comparison with~\eqref{eq:scalar-representation} yields
\[
\cR_{\cX}(u)=\cG(u,0).
\]
\end{proof}

\begin{proposition}
\label{prop:pure-coefficients}
Let \(\cX\) satisfy~\eqref{eq:nilpotent-assumption}. Then
\[
\cR_{\cX}(0)=0,
\qquad
\cR_{\cX}'(0)=0.
\]
Consequently,
\begin{equation}
\label{eq:R-expansion}
\cR_{\cX}(u)
=
\sum_{k=2}^{r}g_k u^k+o(u^r).
\end{equation}
The coefficients \(g_k\) will be referred to as the Taylor
coefficients of the distinguished restriction. In Jordan coordinates,
\(g_k\) is precisely the coefficient of the pure monomial \(u^k\)
in the second component of the vector field.
\end{proposition}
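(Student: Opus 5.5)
The plan is to derive both vanishing statements from Proposition~\ref{prop:scalar-jordan}, which gives \(\cR_{\cX}(u)=\cG(u,0)\) in Jordan coordinates, combined with the structure of system~\eqref{eq:nilpotent-system}. There \(\cF,\cG=\cO(\|(u,v)\|^2)\) near the origin, so all constant and linear parts of the transformed field are carried by the matrix \(P^{-1}AP\).

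First we record that, since \(z=P(u,v)^T\) is a linear change of coordinates, the transformed field is \(P^{-1}\cX(P(u,v)^T)\). This map is \(C^r\), vanishes at the origin, and has derivative \(P^{-1}AP=\begin{pmatrix}0&1\\0&0\end{pmatrix}\) there. Its second component is therefore \(\cG\), with \(\cG(0,0)=0\) and \(\D\cG(0,0)=0\). Setting \(v=0\) gives \(\cR_{\cX}(0)=\cG(0,0)=0\) and \(\cR_{\cX}'(0)=\partial_u\cG(0,0)=0\).

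Alternatively, one can argue intrinsically from~\eqref{eq:intrinsic-restriction}. Differentiating \(\rho_{\cX}(uq_0)=A\cX(uq_0)\) at \(u=0\) gives \(A\,\D\cX(0)q_0=A^2q_0=0\) by~\eqref{eq:nilpotent-assumption}. Together with \(\cX(0)=0\), this yields both identities without using coordinates. I would mention this as a consistency check.

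For the expansion~\eqref{eq:R-expansion}, \(u\mapsto\cG(u,0)\) is \(C^r\) as a composition of \(C^r\) maps. Taylor's theorem with Peano remainder gives \(\cR_{\cX}(u)=\sum_{k=0}^r g_ku^k+o(u^r)\), and the first two coefficients vanish by the step above.

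Finally, \(g_k=\frac{1}{k!}\partial_u^k\cG(0,0)\) is by definition the coefficient of \(u^k\) in the degree-\(k\) Taylor polynomial of \(\cG\). Restricting to \(v=0\) annihilates every monomial \(u^iv^j\) with \(j\ge1\), so only the pure monomials survive; this identifies \(g_k\) with the coefficient of the pure \(u^k\) term.

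There is no genuine obstacle here. The only point needing care is keeping the regularity honest: with only \(C^r\) smoothness we must use a finite Taylor expansion with \(o(u^r)\) remainder rather than a power series.
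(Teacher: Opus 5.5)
Your proposal is correct and follows essentially the same route as the paper: it uses $\cR_{\cX}(u)=\cG(u,0)$, derives $\cG(0,0)=0$ and $\D\cG(0,0)=0$ from the canonical nilpotent linear part, applies Taylor's theorem, and notes that setting $v=0$ kills all monomials containing $v$. Your intrinsic cross-check, differentiating $A\cX(uq_0)$ at $u=0$ to get $A^2q_0=0$, is also valid. It is a coordinate-free confirmation that the paper does not include.
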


\begin{proof}
Since the origin is an equilibrium of
\eqref{eq:nilpotent-system}, $\cG(0,0)=0.$
Furthermore, the linear part of the second equation of
\eqref{eq:nilpotent-system} vanishes identically because the
linearization has canonical nilpotent form
\[
\begin{pmatrix}
0&1\\
0&0
\end{pmatrix}.
\]
Hence $D\cG(0,0)=0,$
and, by~\eqref{eq:distinguished-restriction},
\[
\cR_{\cX}(0)=0,
\qquad
\cR_{\cX}'(0)=0.
\]
The expansion~\eqref{eq:R-expansion} follows from Taylor's theorem.
Since \(\cR_{\cX}(u)=\cG(u,0)\), setting \(v=0\) eliminates every
monomial containing a positive power of \(v\), leaving precisely the
pure monomials \(u^k\) in the second component.
\end{proof}

\begin{remark}
\label{rem:not-normal form}
The Taylor coefficients \(g_k\) in~\eqref{eq:R-expansion} are those
of the distinguished restriction. Equivalently, in Jordan coordinates
they are the coefficients of the pure monomials \(u^k\) in the second
component of the linearly reduced vector field. They are not
coefficients of a nonlinear normal form. In general, the latter also
contain contributions generated by mixed terms during successive
near--identity transformations, a point we return to below.
\end{remark}

\begin{proposition}
\label{prop:g2-intrinsic}
Let \(\cX\in C^2\) satisfy~\eqref{eq:nilpotent-assumption}, and let
\(q_0\) be the generator of \(K\) used in
\eqref{eq:scalar-representation}. Then the quadratic Taylor
coefficient of the distinguished restriction satisfies
\begin{equation}
\label{eq:g2-intrinsic}
g_2=
-\frac12\,D_{q_0}\bigl(\det D\cX\bigr)(0).
\end{equation}
Consequently, \(g_2\) coincides with the quadratic Bogdanov--Takens
normal form coefficient usually denoted by \(a\).
\end{proposition}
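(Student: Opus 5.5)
Since $\det$ and the directional derivative are coordinate independent once we account for the similarity transform, I would work entirely in the Jordan coordinates~\eqref{eq:jordan-coordinates}. Write $\widehat\cX(u,v)=P^{-1}\cX(P(u,v)^T)$ for the transformed field~\eqref{eq:nilpotent-system}. Its Jacobian at a point is $D\widehat\cX(u,v)=P^{-1}D\cX(z)P$ with $z=P(u,v)^T$, so $\det D\widehat\cX(u,v)=\det D\cX(z)$. Moving along $q_0=P e_1$ in $z$ corresponds to moving along $e_1$ in $(u,v)$. Hence
\[
D_{q_0}\bigl(\det D\cX\bigr)(0)=\partial_u\bigl(\det D\widehat\cX\bigr)(0,0).
\]
This reduces \eqref{eq:g2-intrinsic} to a computation for the system $\dot u=v+\cF$, $\dot v=\cG$.

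Next I would compute the determinant explicitly:
\[
\det D\widehat\cX=(\cF_u)(\cG_v)-(1+\cF_v)\cG_u.
\]
Since $\cF,\cG=\cO(\|(u,v)\|^2)$ and $\cX\in C^2$, all first partials of $\cF,\cG$ vanish at the origin and are $C^1$. Differentiating with respect to $u$ at $(0,0)$, the product $\cF_u\cG_v$ has derivative zero, because each factor vanishes there. The second term contributes $-\cF_{vu}(0)\cG_u(0)-(1+\cF_v(0))\cG_{uu}(0)=-\cG_{uu}(0,0)$. By Proposition~\ref{prop:scalar-jordan} and Proposition~\ref{prop:pure-coefficients}, $\cG_{uu}(0,0)=\cR_\cX''(0)=2g_2$. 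This gives $\partial_u\det D\widehat\cX(0)=-2g_2$, which is \eqref{eq:g2-intrinsic}.

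For the final assertion, I would recall the standard Bogdanov--Takens reduction. In coordinates~\eqref{eq:nilpotent-system} with quadratic parts $\cF_2=\sum a_{ij}u^iv^j$ and $\cG_2=\sum b_{ij}u^iv^j$, the normal form $\dot w_0=w_1$, $\dot w_1=a w_0^2+b w_0w_1+\dots$ has $a=b_{20}$ (see \cite{Kuznetsov2004}). This holds because the homological operator for the nilpotent linear part cannot produce the $u^2$ term in the second component from the other quadratic terms. Equivalently, one can invoke the intrinsic formula of \cite{CastellanosChanLopez2026}, $a=-\tfrac12 D_{q_0}(\det D\cX)(0)$ with the same normalization of the chain, and compare it with what was just proved. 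The main obstacle is this last identification. One must make sure the normalization of $a$ (the choice of $q_0,q_1$ and the absence of a time rescaling) matches the paper's convention, so that the identity holds exactly and not only up to a positive factor. I would fix it by using the chain normalization $Aq_0=0$, $Aq_1=q_0$ from~\eqref{eq:jordan-chain}, which is the one used in the cited references.
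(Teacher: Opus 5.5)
Your proposal is correct and follows essentially the same route as the paper: you pass to Jordan coordinates, expand $\det DY=\mathcal F_u\mathcal G_v-(1+\mathcal F_v)\mathcal G_u$, and obtain $\partial_u\det DY(0,0)=-\mathcal G_{uu}(0,0)=-2g_2$, with the identification $g_2=a$ taken from the intrinsic formula of Castellanos and Chan--L\'opez. Your similarity argument $\det D\widehat{\mathcal X}=(\det D\mathcal X)\circ P$ makes precise the paper's brief remark that the $u$-axis is generated by $q_0$, and your homological-operator observation is a valid self-contained supplement to the citation: the second component of the range is divisible by $v$, so the $u^2$ coefficient there is unchanged.
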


\begin{proof}
In the Jordan coordinates~\eqref{eq:nilpotent-system}, write
\[
Y(u,v)=
\begin{pmatrix}
v+\cF(u,v)\\
\cG(u,v)
\end{pmatrix}.
\]
Since \(\cF,\cG=\cO(\|(u,v)\|^2)\),
\[
DY(u,v)=
\begin{pmatrix}
\cF_u & 1+\cF_v\\
\cG_u & \cG_v
\end{pmatrix},
\]
and therefore
\[
\det DY
=
\cF_u\cG_v-(1+\cF_v)\cG_u.
\]
Differentiating with respect to \(u\) and evaluating at the origin,
all terms vanish except the derivative of \(-\cG_u\), so that
\[
\partial_u\det DY(0,0)
=
-\cG_{uu}(0,0)
=
-2g_2.
\]
Because the \(u\)-axis is generated by \(q_0\), this gives
\eqref{eq:g2-intrinsic}. The identification with the quadratic
Bogdanov--Takens coefficient follows from the intrinsic formula of
Castellanos and Chan--L\'opez~\cite{CastellanosChanLopez2026}.
\end{proof}

\begin{remark}
\label{rem:beyond-intrinsic-g2}
Castellanos and Chan--L\'opez~\cite{CastellanosChanLopez2026} also show
that the second quadratic Bogdanov--Takens coefficient is
\[
b=D_{q_0}\bigl(\operatorname{tr}D\cX\bigr)(0).
\]
In the Jordan coordinates~\eqref{eq:nilpotent-system}, this becomes
\[
b=\cF_{uu}(0,0)+\cG_{uv}(0,0),
\]
so it depends on mixed information that is not contained in
\(\cR_{\cX}(u)=\cG(u,0)\) alone. Thus the distinguished restriction
recovers the intrinsic quadratic coefficient \(a=g_2\), while its
additional content lies in the higher Taylor coefficients
\(g_3,g_4,\ldots\), whose organization is studied below before any
nonlinear normal form transformation is performed.
\end{remark}

\section{Changes of Jordan chain and structural invariance}
\label{sec:invariance}
The intrinsic map \(\rho_{\cX}:K\to K\) introduced in
Section~\ref{sec:nilpotent} is independent of any Jordan chain,
whereas its scalar representation
\(\cR_{\cX}\) depends on the choice of a generator of
\(K=\ker A\). We now determine precisely how this scalar
representation changes under a different admissible choice of
Jordan chain. This will allow us to identify which properties of the
restriction are independent of that choice.

Let \(A\neq0\) be a nilpotent \(2\times2\) matrix satisfying
\(A^2=0\), and let
\[
Aq_0=0,
\qquad
Aq_1=q_0
\]
be a Jordan chain.

\begin{lemma}
\label{lem:jordan-chains}
Every other Jordan chain
\((\widetilde q_0,\widetilde q_1)\) satisfying
\[
A\widetilde q_0=0,
\qquad
A\widetilde q_1=\widetilde q_0
\]
is of the form
\begin{equation}
\label{eq:jordan-chain-change}
\widetilde q_0=a q_0,
\qquad
\widetilde q_1=a q_1+bq_0,
\end{equation}
for some \(a\in\R\setminus\{0\}\) and \(b\in\R\).
\end{lemma}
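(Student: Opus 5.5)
The plan is to use two facts about \(A\): that \(K=\ker A=\operatorname{im}A\) is one-dimensional, and that \(\{q_0,q_1\}\) is a basis of \(\R^2\). The argument has two steps, one for each vector of the new chain.

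\emph{Step 1: the vector \(\widetilde q_0\).} I would first check that \(\widetilde q_0\neq0\). A Jordan chain requires this implicitly: if \(\widetilde q_0=0\), then \(\widetilde P=(\widetilde q_0\ \widetilde q_1)\) could not be invertible, as it must be for \eqref{eq:jordan-coordinates}. Alternatively, \(A\widetilde q_1=\widetilde q_0\) should be nonzero for a genuine chain. Once \(\widetilde q_0\neq0\), the condition \(A\widetilde q_0=0\) gives \(\widetilde q_0\in K=\operatorname{span}\{q_0\}\). Hence \(\widetilde q_0=aq_0\) for some \(a\neq0\).

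\emph{Step 2: the vector \(\widetilde q_1\).} I would compute \(A(\widetilde q_1-aq_1)=\widetilde q_0-aq_0=0\). This places \(\widetilde q_1-aq_1\) in \(K\), so \(\widetilde q_1-aq_1=bq_0\) for some \(b\in\R\). This is exactly \eqref{eq:jordan-chain-change}.

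\emph{Converse.} For completeness I would also note that any pair of the form \eqref{eq:jordan-chain-change} with \(a\neq0\) is again a Jordan chain. This follows by direct substitution: \(A(aq_0)=0\) and \(A(aq_1+bq_0)=aq_0\). Moreover, the change-of-basis matrix \(\begin{pmatrix}a&b\\0&a\end{pmatrix}\) has determinant \(a^2\neq0\), so the new pair is again a basis.

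The only delicate point is Step 1, specifically excluding \(\widetilde q_0=0\). I would handle it by making explicit that a Jordan chain, as used in \eqref{eq:jordan-coordinates}, requires \(\widetilde P\) to be invertible. Everything else is a direct consequence of \(\dim K=1\).
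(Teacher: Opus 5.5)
Your proposal is correct and follows the paper's proof essentially step for step. The paper uses the same two steps: \(\widetilde q_0\in\ker A=\operatorname{span}\{q_0\}\), then \(A(\widetilde q_1-aq_1)=0\). Your one improvement is that you justify \(\widetilde q_0\neq0\) (hence \(a\neq0\)) through invertibility of \(\widetilde P\), which the paper simply asserts. Your converse is a harmless addition not needed for the statement.
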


\begin{proof}
Since \(A\neq0\), \(A^2=0\), and \(A\) acts on \(\R^2\), we have
\[
\dim\ker A=1.
\]
Hence \(\widetilde q_0\in\ker A=\operatorname{span}\{q_0\}\), so
$\widetilde q_0=a q_0$ 
for some \(a\neq0\). Moreover,
\[
A(\widetilde q_1-aq_1)
=
A\widetilde q_1-aAq_1
=
a q_0-a q_0
=0.
\]
Thus
\[
\widetilde q_1-aq_1\in\ker A
=
\operatorname{span}\{q_0\},
\]
and therefore $\widetilde q_1=a q_1+bq_0$, 
for some \(b\in\R\).
\end{proof}

The preceding lemma shows that changing the Jordan chain involves two
independent freedoms: a rescaling of the generator \(q_0\) by
\(a\neq0\), and the addition of a multiple of \(q_0\) to the
generalized eigenvector \(q_1\). Since the distinguished restriction
is the scalar representation of \(\rho_{\cX}\) relative to \(q_0\),
only the first of these affects that representation.

\begin{theorem}
\label{thm:restriction-transformation}
Let \(\cR_{\cX}\) and \(\widetilde{\cR}_{\cX}\) denote the
distinguished eigendirection restrictions associated with the Jordan
chains \((q_0,q_1)\) and
\((\widetilde q_0,\widetilde q_1)\), respectively. If the two chains
are related by~\eqref{eq:jordan-chain-change}, then
\begin{equation}
\label{eq:R-transformation}
\widetilde{\cR}_{\cX}(\widetilde u)
=
\frac{1}{a}\,
\cR_{\cX}(a\widetilde u).
\end{equation}
In particular, the transformation law is independent of the parameter
\(b\).
\end{theorem}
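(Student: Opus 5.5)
The plan is to work with the intrinsic map \(\rho_{\cX}\) of \eqref{eq:intrinsic-restriction} rather than with the Jordan coordinates directly. The map does not depend on any chain, and \cref{prop:scalar-jordan} shows that once the generator of \(K\) is fixed, the scalar representation does not depend on the generalized eigenvector. So the only data entering \(\cR_{\cX}\) versus \(\widetilde{\cR}_{\cX}\) is the choice of generator, \(q_0\) versus \(\widetilde q_0=aq_0\). This already explains why \(b\) cannot appear.

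Concretely, I would apply Definition~\ref{def:distinguished-restriction} with the generator \(\widetilde q_0\). This gives
\[
\rho_{\cX}(\widetilde u\,\widetilde q_0)=\widetilde{\cR}_{\cX}(\widetilde u)\,\widetilde q_0=a\,\widetilde{\cR}_{\cX}(\widetilde u)\,q_0 .
\]
On the other hand, \(\widetilde u\,\widetilde q_0=(a\widetilde u)q_0\), so the definition with generator \(q_0\) gives
\[
\rho_{\cX}(\widetilde u\,\widetilde q_0)=\cR_{\cX}(a\widetilde u)\,q_0 .
\]
Since \(q_0\neq0\), the scalar coefficients must agree, so \(a\widetilde{\cR}_{\cX}(\widetilde u)=\cR_{\cX}(a\widetilde u)\). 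Dividing by \(a\neq0\) gives \eqref{eq:R-transformation}.

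The only point needing care is domains. Both sides are defined for \(\widetilde u\) near \(0\), and \(a\widetilde u\) is near \(0\) exactly when \(\widetilde u\) is, so the identity holds on a common neighborhood.

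As a consistency check, I would also verify the law in coordinates. Write \(z=uq_0+vq_1=\widetilde u\widetilde q_0+\widetilde v\widetilde q_1\). Substituting \(\widetilde q_0=aq_0\) and \(\widetilde q_1=aq_1+bq_0\) gives \(v=a\widetilde v\) and \(u=a\widetilde u+b\widetilde v\). The second component transforms as \(\dot{\widetilde v}=\dot v/a\), so \(\widetilde{\cG}(\widetilde u,\widetilde v)=\frac1a\cG(a\widetilde u+b\widetilde v,a\widetilde v)\). Setting \(\widetilde v=0\) removes \(b\) and reproduces the formula via \cref{prop:scalar-jordan}.

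I do not expect a genuine obstacle. The step most easily botched is the bookkeeping of the factor \(a\): it appears once from rescaling the argument and once from rescaling the generator on the output side, which is why the law has \(1/a\) outside and \(a\) inside.
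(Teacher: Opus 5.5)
Your proposal is correct and follows essentially the same approach as the paper. Both arguments evaluate $\rho_{\cX}(\widetilde u\,\widetilde q_0)$ once through the generator $q_0$ and once through $\widetilde q_0=aq_0$, then compare scalar coefficients, so $b$ never enters. Your coordinate check via $u=a\widetilde u+b\widetilde v$, $v=a\widetilde v$ is an extra confirmation that the paper does not include, and it is also correct.
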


\begin{proof}
By Definition~\ref{def:distinguished-restriction},
\[
\rho_{\cX}(u q_0)
=
\cR_{\cX}(u)q_0.
\]
For the second Jordan chain,
$\widetilde q_0=a q_0,$
and hence
\[
\rho_{\cX}(\widetilde u\,\widetilde q_0)
=
\rho_{\cX}(a\widetilde u\,q_0)
=
\cR_{\cX}(a\widetilde u)q_0.
\]
Since \(q_0=a^{-1}\widetilde q_0\), this becomes
\[
\rho_{\cX}(\widetilde u\,\widetilde q_0)
=
\frac{1}{a}\,
\cR_{\cX}(a\widetilde u)\widetilde q_0.
\]
On the other hand, the scalar representation associated with
\(\widetilde q_0\) is defined by
\[
\rho_{\cX}(\widetilde u\,\widetilde q_0)
=
\widetilde{\cR}_{\cX}(\widetilde u)\widetilde q_0.
\]
Comparison of the two expressions gives
\[
\widetilde{\cR}_{\cX}(\widetilde u)
=
\frac{1}{a}\,
\cR_{\cX}(a\widetilde u).
\]
The parameter \(b\) does not enter because it changes only the
generalized eigenvector \(q_1\), whereas the scalar representation
of \(\rho_{\cX}\) depends only on the generator of \(K\).
\end{proof}

\begin{corollary}
\label{cor:coefficient-transformation}
Let \(\cX\in C^r\), \(r\ge2\), and write
\[
\cR_{\cX}(u)
=
\sum_{k=2}^{r}g_k u^k+o(u^r),
\qquad
\widetilde{\cR}_{\cX}(\widetilde u)
=
\sum_{k=2}^{r}\widetilde g_k\widetilde u^k
+o(\widetilde u^r).
\]
Then the Taylor coefficients of the two distinguished restrictions
satisfy
\begin{equation}
\label{eq:gk-transformation}
\widetilde g_k=a^{k-1}g_k,
\qquad 2\le k\le r.
\end{equation}
Consequently, within the available jet order, the vanishing or
nonvanishing of each Taylor coefficient is independent of the chosen
Jordan chain. If \(\cR_{\cX}\) is analytic, in particular rational
near the origin, the same formula holds for every \(k\ge2\).
\end{corollary}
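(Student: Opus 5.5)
The plan is to substitute the Taylor expansion of \(\cR_{\cX}\) into the transformation law~\eqref{eq:R-transformation} of Theorem~\ref{thm:restriction-transformation}, and then use uniqueness of Taylor coefficients. By Proposition~\ref{prop:pure-coefficients}, \(\cR_{\cX}(u)=\sum_{k=2}^{r}g_ku^k+o(u^r)\). Setting \(u=a\widetilde u\) gives
\[
\frac1a\,\cR_{\cX}(a\widetilde u)
=\sum_{k=2}^{r}a^{k-1}g_k\widetilde u^k+\frac1a\,o\bigl((a\widetilde u)^r\bigr).
\]
Since \(a\neq0\) is a fixed constant, the remainder \(\frac1a\,o\bigl((a\widetilde u)^r\bigr)\) is \(o(\widetilde u^r)\) as \(\widetilde u\to0\).

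The right-hand side is therefore a polynomial of degree at most \(r\) plus an \(o(\widetilde u^r)\) term. Its left-hand side equals \(\widetilde{\cR}_{\cX}(\widetilde u)=\sum_{k=2}^r\widetilde g_k\widetilde u^k+o(\widetilde u^r)\). I would then invoke the standard uniqueness of such expansions: if two polynomials of degree at most \(r\) differ by \(o(\widetilde u^r)\), they coincide. This follows by dividing successively by \(\widetilde u^j\) and letting \(\widetilde u\to0\). Comparing coefficients yields \(\widetilde g_k=a^{k-1}g_k\) for \(2\le k\le r\).

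The invariance statement is then immediate. Because \(a^{k-1}\neq0\), we have \(\widetilde g_k=0\) if and only if \(g_k=0\).

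For the analytic case, \(\cR_{\cX}(u)=\sum_{k\ge2}g_ku^k\) converges on some disc \(|u|<\delta\). Then \(\frac1a\cR_{\cX}(a\widetilde u)=\sum_{k\ge2}a^{k-1}g_k\widetilde u^k\) converges for \(|\widetilde u|<\delta/|a|\), and it equals \(\widetilde{\cR}_{\cX}\) there by Theorem~\ref{thm:restriction-transformation}. Uniqueness of power series coefficients then gives the formula for every \(k\ge2\).

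A rational \(\cR_{\cX}=u^2P/Q\) with \(Q(0)\neq0\) is analytic near the origin, so it falls under this case. One can also note that \(\frac1a\cR_{\cX}(a\widetilde u)\) is again rational in \(\widetilde u\), with denominator \(Q(a\widetilde u)\).

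No step is a genuine obstacle. The only point needing care is the remainder term: one must check that \(o((a\widetilde u)^r)=o(\widetilde u^r)\), which holds because \(a\neq0\) is fixed. One must also make sure that, in the finite-smoothness case, the comparison is made only up to order \(r\).
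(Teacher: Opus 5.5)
Your proposal is correct and follows the paper's proof: substitute the expansion into the scaling law of Theorem~\ref{thm:restriction-transformation}, absorb the rescaled remainder into \(o(\widetilde u^r)\) using \(a\neq0\), and compare coefficients, with the analytic (hence rational) case handled by the same comparison on the convergent series. You also spell out the uniqueness of finite-order expansions and the remainder estimate, which the paper leaves implicit.
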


\begin{proof}
Substituting the Taylor expansion of \(\cR_{\cX}\) into
\eqref{eq:R-transformation} gives
\[
\widetilde{\cR}_{\cX}(\widetilde u)
=
\frac1a
\left(
\sum_{k=2}^{r}g_k(a\widetilde u)^k
+o(\widetilde u^r)
\right)
=
\sum_{k=2}^{r}
a^{k-1}g_k\widetilde u^k
+o(\widetilde u^r).
\]
Comparison of Taylor coefficients yields
\eqref{eq:gk-transformation}. In the analytic case the same argument
applies to the full convergent Taylor series.
\end{proof}

\begin{remark}
\label{rem:scope-invariance}
The preceding statements concern changes of Jordan chain for a fixed
vector field in a fixed phase-space chart. They do not assert that the
higher Taylor coefficients \(g_k\), \(k\ge3\), are invariants under
arbitrary nonlinear changes of phase coordinates. Rather, the
structural information considered here is attached to the
distinguished restriction modulo admissible changes of Jordan chain.
Keeping the two notions apart matters when comparing the restriction with
smooth nilpotent normal forms, where nonlinear near--identity
transformations may mix pure and mixed terms.
\end{remark}

\begin{definition}
\label{def:restriction-order}
Assume that \(\cR_{\cX}\not\equiv0\). We define the
\emph{restriction order} by
\begin{equation}
\label{eq:restriction-order}
\nu(\cX):=\operatorname{ord}_{u=0}\cR_{\cX}
=\min\{k\ge2:\ g_k\neq0\}.
\end{equation}
If \(\cR_{\cX}\equiv0\), we set
$\nu(\cX)=\infty.$
\end{definition}

\begin{corollary}
\label{cor:restriction-order-invariant}
The restriction order \(\nu(\cX)\) is independent of the Jordan
chain used to define \(\cR_{\cX}\).
\end{corollary}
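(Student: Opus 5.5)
The plan is to deduce the statement from the transformation law of Theorem~\ref{thm:restriction-transformation} and the coefficient formula of Corollary~\ref{cor:coefficient-transformation}. Let \((q_0,q_1)\) and \((\widetilde q_0,\widetilde q_1)\) be two Jordan chains. By Lemma~\ref{lem:jordan-chains} they are related by~\eqref{eq:jordan-chain-change} for some \(a\neq0\) and \(b\in\R\). Theorem~\ref{thm:restriction-transformation} then gives \(\widetilde{\cR}_{\cX}(\widetilde u)=a^{-1}\cR_{\cX}(a\widetilde u)\).

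First I would treat the degenerate case. Since \(a\neq0\), the map \(\widetilde u\mapsto a\widetilde u\) is a bijection of a neighborhood of \(0\). It follows that \(\widetilde{\cR}_{\cX}\equiv0\) if and only if \(\cR_{\cX}\equiv0\). So \(\nu=\infty\) holds for one chain exactly when it holds for the other.

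Next suppose \(\cR_{\cX}\not\equiv0\) and \(\nu(\cX)=k_0\) is finite, attained by a Taylor coefficient. Corollary~\ref{cor:coefficient-transformation} gives \(\widetilde g_k=a^{k-1}g_k\) with \(a^{k-1}\neq0\). Hence \(\widetilde g_k=0\) exactly when \(g_k=0\), and the minima in~\eqref{eq:restriction-order} agree.

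The one delicate point is the intermediate case, which I expect to be the main obstacle, although it is mild. In finite regularity \(C^r\), the restriction can be nonzero while \(g_2=\dots=g_r=0\), so the order is not detected by the available jet. I would handle this by reading \(\operatorname{ord}_{u=0}\) as the vanishing order of the function itself. The identity \(\widetilde{\cR}_{\cX}(\widetilde u)=a^{-1}\cR_{\cX}(a\widetilde u)\) shows directly that \(\cR_{\cX}(u)=O(|u|^m)\) if and only if \(\widetilde{\cR}_{\cX}(\widetilde u)=O(|\widetilde u|^m)\), for every \(m\), since rescaling the variable by the nonzero constant \(a\) preserves such bounds. Any notion of order defined through these bounds is therefore chain independent. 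In the analytic or rational case, every \(k\ge2\) is covered by Corollary~\ref{cor:coefficient-transformation}, and no such subtlety arises.
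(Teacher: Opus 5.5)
Your proposal is correct and takes the same route as the paper, whose proof is just the observation that $\widetilde g_k=a^{k-1}g_k$ with $a\neq0$ preserves the vanishing of each Taylor coefficient, and hence the index of the first nonzero one. Your separate handling of $\cR_{\cX}\equiv0$ and of the finite-regularity case is sound and fills a gap the paper leaves. In the $C^r$ case the paper applies the coefficient law for every $k\ge2$, although Corollary~\ref{cor:coefficient-transformation} only gives it for $k\le r$; your argument covers this directly from the scaling identity $\widetilde{\cR}_{\cX}(\widetilde u)=a^{-1}\cR_{\cX}(a\widetilde u)$.
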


\begin{proof}
By Corollary~\ref{cor:coefficient-transformation},
\[
\widetilde g_k=a^{k-1}g_k,
\qquad a\neq0.
\]
Hence
\[
\widetilde g_k=0
\quad\Longleftrightarrow\quad
g_k=0
\]
for every \(k\ge2\). Therefore the index of the first nonzero Taylor
coefficient is unchanged.
\end{proof}


\section{Rational restrictions and recurrence relations}
\label{sec:rational}
We now consider the case in which the distinguished restriction is
rational. This situation arises naturally in planar models whose
vector fields contain rational nonlinearities, but the results of this
section are purely algebraic and do not depend on any particular
dynamical system. The main point is that the denominator of the exact
restriction imposes a finite recurrence on its Taylor coefficients.

Suppose that
\begin{equation}
\label{eq:rational-restriction-general}
\cR_{\cX}(u)
=
u^2\frac{P(u)}{Q(u)},
\qquad
Q(0)\neq0,
\end{equation}
where
\[
P(u)=\sum_{j=0}^{m}p_j u^j,
\qquad
Q(u)=\sum_{j=0}^{d}q_j u^j,
\qquad
q_0\neq0.
\]
Since \(Q(0)\neq0\), the quotient \(P/Q\) is analytic in a
neighborhood of the origin. We therefore write
\begin{equation}
\label{eq:rational-series}
\cR_{\cX}(u)
=
u^2\sum_{n\ge0}c_nu^n
=\sum_{k\ge2}g_k u^k,
\qquad
g_{n+2}=c_n.
\end{equation}

\begin{theorem} 
\label{thm:rational-recurrence}
Suppose that the distinguished restriction has the rational
representation~\eqref{eq:rational-restriction-general}. Then its
Taylor coefficients are completely determined by
\begin{equation}
\label{eq:inhomogeneous-recurrence}
q_0c_n+q_1c_{n-1}+\cdots+q_dc_{n-d}=p_n,
\qquad n\ge0,
\end{equation}
where
\[
c_j=0\quad\text{for }j<0,
\qquad
p_n=0\quad\text{for }n>m.
\]
Equivalently,
\begin{equation}
\label{eq:recursive-cn}
c_n
=
\frac{1}{q_0}
\left(
p_n-\sum_{j=1}^{d}q_jc_{n-j}
\right).
\end{equation}
In particular, for every \(n>m\), the sequence satisfies the
homogeneous recurrence
\begin{equation}
\label{eq:homogeneous-recurrence}
q_0c_n+q_1c_{n-1}+\cdots+q_dc_{n-d}=0.
\end{equation}
Thus the complete sequence of Taylor coefficients of the
distinguished restriction is determined by finitely many coefficients
of \(P\) and \(Q\).
\end{theorem}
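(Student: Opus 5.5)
The plan is to work with the identity of formal (indeed convergent) power series
\[
Q(u)\sum_{n\ge0}c_nu^n=P(u),
\]
which follows from~\eqref{eq:rational-restriction-general} and~\eqref{eq:rational-series}. Since \(Q(0)=q_0\neq0\), the quotient \(P/Q\) is analytic near the origin. Hence \(\cR_{\cX}(u)/u^2=\sum_{n\ge0}c_nu^n\) converges on a neighborhood of \(0\), and multiplying by \(Q\) gives an identity of analytic functions there. Two convergent power series that agree on a neighborhood of the origin have the same coefficients, so I would compare coefficients of \(u^n\) on both sides.

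The Cauchy product yields
\[
[u^n]\bigl(Q\cdot\textstyle\sum c_ku^k\bigr)=\sum_{j=0}^{\min(n,d)}q_jc_{n-j}.
\]
With the conventions \(c_j=0\) for \(j<0\) and \(p_n=0\) for \(n>m\), this is exactly~\eqref{eq:inhomogeneous-recurrence} for every \(n\ge0\). Dividing by \(q_0\neq0\) gives~\eqref{eq:recursive-cn}.

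Next I would show that the recurrence determines the sequence completely. A strong induction on \(n\) does this: \(c_0=p_0/q_0\), and each \(c_n\) is given by~\eqref{eq:recursive-cn} in terms of \(p_n\) and \(c_{n-1},\dots,c_{n-d}\), which are already determined. Thus the whole sequence \((c_n)\), and so \((g_k)\) via \(g_{n+2}=c_n\), depends only on the finitely many numbers \(p_0,\dots,p_m,q_0,\dots,q_d\).

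For \(n>m\) we have \(p_n=0\), which gives~\eqref{eq:homogeneous-recurrence}.

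I expect no serious obstacle. The only point needing care is justifying the coefficient comparison: either through analyticity and uniqueness of Taylor coefficients, or by working in the ring of formal power series, where \(Q\) is invertible because \(q_0\neq0\). The index conventions must also be stated so that the formula is valid for small \(n<d\) without a separate case.
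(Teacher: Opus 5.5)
Your proposal is correct and follows essentially the same route as the paper: multiply $\sum_{n\ge0}c_nu^n$ by $Q$, compare coefficients of $u^n$ to obtain the inhomogeneous recurrence, divide by $q_0\neq0$, and use $p_n=0$ for $n>m$ to get the homogeneous recurrence. You also justify the coefficient comparison (via analyticity or formal power series) and spell out the strong induction behind ``completely determined''; the paper leaves both implicit, and they are small but welcome additions.
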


\begin{proof}
From~\eqref{eq:rational-series},
\[
\frac{P(u)}{Q(u)}
=
\sum_{n\ge0}c_nu^n.
\]
Multiplying by \(Q(u)\) gives
\[
P(u)
=
\left(
\sum_{j=0}^{d}q_ju^j
\right)
\left(
\sum_{n\ge0}c_nu^n
\right).
\]
Comparison of the coefficient of \(u^n\) yields
\[
p_n
=
\sum_{j=0}^{d}q_jc_{n-j},
\]
with the convention \(c_j=0\) for \(j<0\). Since \(q_0\neq0\),
solving for \(c_n\) gives~\eqref{eq:recursive-cn}. Finally,
\(p_n=0\) for \(n>m\), and hence
\eqref{eq:homogeneous-recurrence} follows.
\end{proof}

\begin{remark}
\label{rem:denominator-controls}
Theorem~\ref{thm:rational-recurrence} shows that the denominator of
the exact restriction controls the higher order structure of its
Taylor coefficients. Once the finitely many coefficients required to
initialize the recurrence are known, no further Taylor expansion is
needed to determine the remaining coefficients.
\end{remark}

\begin{corollary}
\label{cor:linear-denominator}
Suppose that
\begin{equation}
\label{eq:linear-denominator}
\cR_{\cX}(u)
=
C u^2\frac{a+bu}{d-eu},
\qquad
Cd\neq0.
\end{equation}
Then
\begin{equation}
\label{eq:g2-linear}
g_2=\frac{Ca}{d},
\end{equation}
whereas, for every \(k\ge3\),
\begin{equation}
\label{eq:gk-linear}
g_k=C\frac{e^{\,k-3}}{d^{\,k-1}}\bigl(ae+bd\bigr).
\end{equation}
As a result, all Taylor coefficients of degree at least three share
the factor
\begin{equation}
\label{eq:Sigma}
\Sigma:=ae+bd.
\end{equation}
\end{corollary}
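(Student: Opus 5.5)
The plan is to apply Theorem~\ref{thm:rational-recurrence} with \(P(u)=C(a+bu)\) and \(Q(u)=d-eu\). In the notation of \eqref{eq:rational-restriction-general} this means \(m=1\), \(p_0=Ca\), \(p_1=Cb\), \(d=1\) as the denominator degree, \(q_0=d\neq0\) and \(q_1=-e\). The recursion \eqref{eq:recursive-cn} then reads
\[
c_n=\frac{1}{d}\bigl(p_n+e\,c_{n-1}\bigr),\qquad c_{-1}=0 .
\]

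First I compute the initial terms. For \(n=0\) the recursion gives \(c_0=Ca/d\), so \(g_2=c_0=Ca/d\), which is \eqref{eq:g2-linear}. For \(n=1\) it gives
\[
c_1=\frac{1}{d}\Bigl(Cb+\frac{eCa}{d}\Bigr)=\frac{C(ae+bd)}{d^2},
\]
so \(g_3=C\Sigma/d^2\). This agrees with \eqref{eq:gk-linear} at \(k=3\), where \(e^{0}=1\) is understood even when \(e=0\).

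Next I use the homogeneous part. For \(n\ge2>m\), \eqref{eq:homogeneous-recurrence} gives \(c_n=(e/d)\,c_{n-1}\). Induction on \(n\) from the base case \(c_1\) then yields
\[
c_n=\Bigl(\frac{e}{d}\Bigr)^{n-1}c_1=C\,\frac{e^{n-1}}{d^{n+1}}\,\Sigma .
\]
Substituting \(k=n+2\) gives \(g_k=C e^{k-3}\Sigma/d^{k-1}\) for all \(k\ge3\). Every such coefficient visibly carries the factor \(\Sigma\).

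As a cross-check, I would also expand directly. Writing \(1/(d-eu)=\sum_j e^j u^j/d^{j+1}\) and collecting the coefficient of \(u^n\) for \(n\ge1\) gives \(C\bigl(ae^n/d^{n+1}+be^{n-1}/d^n\bigr)\), which equals \(C e^{n-1}(ae+bd)/d^{n+1}\).

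There is no real obstacle here. The only point needing care is the convention \(e^{0}=1\) at \(k=3\) when \(e=0\), together with correctly matching the paper's symbol \(d\), which here plays the role of \(q_0\) rather than the denominator degree.
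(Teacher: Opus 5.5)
Your proof is correct, but your main route differs slightly from the paper's. The paper does not invoke Theorem~\ref{thm:rational-recurrence} at all. It proves the corollary by exactly what you call your cross-check: it expands $1/(d-eu)=\frac1d\sum_{j\ge0}(eu/d)^j$ and notes that for $k\ge3$ the coefficient of $u^k$ receives one contribution from $a$ and one from $bu$.

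Your primary argument instead specializes the general recurrence. You compute the two inhomogeneous initial values $c_0=Ca/d$ and $c_1=C\Sigma/d^2$, then solve the first-order homogeneous recurrence $c_n=(e/d)\,c_{n-1}$ for $n\ge2$ by induction.

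Your route makes the statement a genuine corollary of the theorem. It also displays the geometric structure $g_{k+1}=(e/d)\,g_k$ for $k\ge3$ explicitly. That shows why the factor $\Sigma$, created at the single step where the numerator still contributes ($n=1$), propagates to every higher coefficient. The paper's direct expansion is self-contained and gives the closed form in one line without an induction.

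Your remarks on the convention $e^{0}=1$ when $e=0$, and on the clash between the constant $d$ and the degree $d$ in Theorem~\ref{thm:rational-recurrence}, are apt. Both routes cover the cases $e=0$ and $b=0$ without modification.
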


\begin{proof}
Using
\[
\frac{1}{d-eu}
=
\frac1d
\sum_{j\ge0}
\left(\frac{e}{d}u\right)^j,
\]
we obtain
\[
\cR_{\cX}(u)
=
\frac{C}{d}u^2(a+bu)
\sum_{j\ge0}
\left(\frac{e}{d}u\right)^j.
\]
The coefficient of \(u^2\) is \(Ca/d\). For \(k\ge3\), the
coefficient of \(u^k\) receives one contribution from \(a\) and one
from \(bu\), giving
\[
g_k
=
C\left(
\frac{ae^{k-2}}{d^{k-1}}
+
\frac{be^{k-3}}{d^{k-2}}
\right)
=
C\frac{e^{k-3}}{d^{k-1}}(ae+bd).
\]
\end{proof}

\begin{corollary}
\label{cor:all-orders-cancellation}
Under the hypotheses of Corollary~\ref{cor:linear-denominator}, the
following statements are equivalent:
\begin{enumerate}
\item[{1.}] \(\Sigma=ae+bd=0\);
\item[{2.}] \(g_k=0\) for every \(k\ge3\);
\item[{3.}] the distinguished restriction is exactly quadratic,
\[
\cR_{\cX}(u)=\frac{Ca}{d}u^2.
\]
\end{enumerate}
\end{corollary}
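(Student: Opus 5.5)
The plan is to prove the cycle of implications (1)\(\Rightarrow\)(2)\(\Rightarrow\)(3)\(\Rightarrow\)(1), resting entirely on the explicit coefficient formula~\eqref{eq:gk-linear} of Corollary~\ref{cor:linear-denominator}. That formula says that for every \(k\ge3\),
\[
g_k=C\frac{e^{k-3}}{d^{k-1}}\,\Sigma .
\]

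\textbf{Step (1)\(\Rightarrow\)(2).} If \(\Sigma=0\), then every \(g_k\) with \(k\ge3\) carries the factor \(\Sigma\), so \(g_k=0\). The factor \(e^{k-3}\) needs no special care: for \(k=3\) it equals \(e^0=1\) under the convention \(0^0=1\), and this convention agrees with the series expansion used in the proof of Corollary~\ref{cor:linear-denominator}.

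\textbf{Step (2)\(\Rightarrow\)(3).} Since \(d\neq0\), the restriction is analytic near the origin, so it equals its convergent Taylor series \(\sum_{k\ge2}g_ku^k\). If \(g_k=0\) for all \(k\ge3\), this series reduces to \(g_2u^2\). Corollary~\ref{cor:linear-denominator} gives \(g_2=Ca/d\), so \(\cR_{\cX}(u)=(Ca/d)u^2\).

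\textbf{Step (3)\(\Rightarrow\)(1).} This is the only step that needs an argument beyond reading off the formula. The most direct route is to compare coefficients: if \(\cR_{\cX}\) is exactly quadratic, then in particular \(g_3=0\). The formula with \(k=3\) gives \(g_3=C\Sigma/d^2\). Since \(C\neq0\) and \(d\neq0\), this forces \(\Sigma=0\).

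As a cross-check I would also verify this step algebraically. Clearing denominators in \(Cu^2(a+bu)/(d-eu)=(Ca/d)u^2\) gives \(d(a+bu)=a(d-eu)\), that is, \((bd+ae)u=0\) identically, which again yields \(\Sigma=0\).

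I expect no real obstacle. The only points to state explicitly are that analyticity justifies identifying the function with its Taylor series, and that \(C\neq0\) and \(d\neq0\) are what allow the factor \(\Sigma\) to be isolated.
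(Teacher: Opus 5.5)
Your proof is correct and essentially follows the paper's. In both, (1)$\Leftrightarrow$(2) is read off the formula $g_k=Ce^{k-3}\Sigma/d^{k-1}$, with $g_3=C\Sigma/d^2$ and $Cd\neq0$ giving the converse. The only real difference is how (3) is reached: you derive it from (2) by identifying the analytic restriction with its Taylor series, whereas the paper derives it directly from (1) via the factorization $a+bu=\frac{a}{d}(d-eu)$. That factorization shows the numerator--denominator cancellation explicitly, and it is your cross-check identity $d(a+bu)=a(d-eu)$ read in the opposite direction.
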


\begin{proof}
The equivalence between 1.  and 2. follows immediately from
\eqref{eq:gk-linear}. If \(\Sigma=0\), then
$ae+bd=0,$ and hence
\[a+bu=\frac{a}{d}(d-eu).\]
Substitution into~\eqref{eq:linear-denominator} gives
\[
\cR_{\cX}(u)=\frac{Ca}{d}u^2.
\]
Conversely, if the restriction is exactly quadratic, then all its
Taylor coefficients of degree at least three vanish, and (2) implies
(1).
\end{proof}

\begin{remark}
\label{rem:cancellation-invariant}
Although the quantities entering
\eqref{eq:linear-denominator} depend on the chosen representation,
the condition
\[
g_k=0
\qquad\text{for all }k\ge3
\]
is independent of the admissible Jordan chain. Indeed, by
Corollary~\ref{cor:coefficient-transformation}, simultaneous
vanishing of these Taylor coefficients is preserved under every
admissible change of chain. Therefore, for a representation of the
form~\eqref{eq:linear-denominator}, the condition \(\Sigma=0\)
characterizes a structural cancellation of the distinguished
restriction rather than a cancellation of a single finite-order
coefficient.
\end{remark}

\begin{definition}
\label{def:minimal-denominator}
Suppose that the distinguished restriction is rational and not
identically zero. Write
\[
\cR_{\cX}(u)
=
u^2\frac{P(u)}{Q(u)},
\qquad
\gcd(P,Q)=1,
\qquad
Q(0)\neq0.
\]
The polynomial \(Q\), determined up to multiplication by a nonzero
constant, will be called the \emph{minimal denominator} of the
distinguished restriction. Its degree
\[
d_{\cX}:=\deg Q
\]
will be called the \emph{recurrence degree} of the restriction. If
\(\cR_{\cX}/u^2\) is a polynomial, then \(Q=1\) and
\(d_{\cX}=0\). For the identically zero restriction we also set
\(d_{\cX}=0\).
\end{definition}

\begin{proposition}
\label{prop:recurrence-degree-invariant}
Let \(\cR_{\cX}\) be rational. Then the recurrence degree
\(d_{\cX}\) is independent of the admissible Jordan chain used to
define the distinguished restriction.
\end{proposition}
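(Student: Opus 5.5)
The plan is to use the transformation law of Theorem~\ref{thm:restriction-transformation} directly at the level of rational functions. Let \(\cR_{\cX}(u)=u^2P(u)/Q(u)\) be the reduced representation relative to the chain \((q_0,q_1)\), so that \(\gcd(P,Q)=1\) and \(Q(0)\neq0\). Let \((\widetilde q_0,\widetilde q_1)\) be any other admissible chain, related to the first by~\eqref{eq:jordan-chain-change} with \(a\neq0\). Theorem~\ref{thm:restriction-transformation} then gives
\[
\widetilde{\cR}_{\cX}(\widetilde u)
=\frac1a\,\cR_{\cX}(a\widetilde u)
=\widetilde u^{2}\,\frac{a\,P(a\widetilde u)}{Q(a\widetilde u)} .
\]
I would therefore set \(\widetilde P(\widetilde u):=aP(a\widetilde u)\) and \(\widetilde Q(\widetilde u):=Q(a\widetilde u)\).

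The first routine check is that this is an admissible representation. We have \(\widetilde Q(0)=Q(0)\neq0\). Since \(a\neq0\), the leading coefficient \(q_d a^d\) of \(\widetilde Q\) is nonzero, so \(\deg\widetilde Q=\deg Q\); likewise \(\deg\widetilde P=\deg P\).

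The substantive step, and the only real obstacle, is to show that \(\gcd(\widetilde P,\widetilde Q)=1\). I would argue this through the substitution map \(\sigma_a:f(u)\mapsto f(au)\), which is a ring automorphism of \(\R[u]\) with inverse \(\sigma_{1/a}\). Suppose \(h\) divides both \(\widetilde P\) and \(\widetilde Q\). Then \(\sigma_{1/a}(h)\) divides \(\sigma_{1/a}(\widetilde P)=aP\) and \(\sigma_{1/a}(\widetilde Q)=Q\). Because \(a\) is a unit, \(\sigma_{1/a}(h)\) divides both \(P\) and \(Q\), so it is a constant, and hence \(h\) is a constant.

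It follows that \(\widetilde Q\) is the minimal denominator of \(\widetilde{\cR}_{\cX}\). The uniqueness up to a constant factor in Definition~\ref{def:minimal-denominator} then gives \(d_{\cX}\) computed from the new chain equal to \(\deg\widetilde Q=\deg Q\). A different representative, say \(\lambda Q\) with \(\lambda\neq0\), changes nothing.

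Two degenerate cases need brief separate mention. If \(\cR_{\cX}\equiv0\), then \(\widetilde{\cR}_{\cX}\equiv0\) by~\eqref{eq:R-transformation}, and both degrees are \(0\) by convention. If \(\cR_{\cX}/u^2\) is a polynomial, then so is \(\widetilde{\cR}_{\cX}/\widetilde u^{2}\), and both degrees are \(0\).

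Finally, Proposition~\ref{prop:scalar-jordan} shows that \(\cR_{\cX}\) does not depend on \(q_1\). So the parameter \(b\) plays no role, and the argument above covers all admissible changes of chain given by Lemma~\ref{lem:jordan-chains}.
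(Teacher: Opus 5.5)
Your proof is correct and follows essentially the same route as the paper. Both apply the scaling law \(\widetilde{\cR}_{\cX}(\widetilde u)=\frac1a\cR_{\cX}(a\widetilde u)\) to the reduced representation and observe that the substitution \(u\mapsto a\widetilde u\) is a ring automorphism of \(\R[u]\), so it preserves coprimality and \(\deg Q\); you simply write out the coprimality step and the degenerate cases in more detail than the paper does.
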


\begin{proof}
Assume first that \(\cR_{\cX}\not\equiv0\), and write its reduced
representation as
\[
\cR_{\cX}(u)=u^2\frac{P(u)}{Q(u)},
\qquad
\gcd(P,Q)=1,
\qquad
Q(0)\neq0.
\]
Under the change of Jordan chain in
Theorem~\ref{thm:restriction-transformation},
\[
\widetilde{\cR}_{\cX}(\widetilde u)
=
\frac1a\cR_{\cX}(a\widetilde u)
=
a\widetilde u^2
\frac{P(a\widetilde u)}{Q(a\widetilde u)}.
\]
Since \(a\neq0\), the substitution
\(u\mapsto a\widetilde u\) is an automorphism of the polynomial ring,
so it preserves coprimality,
\[
\gcd\bigl(P(a\widetilde u),Q(a\widetilde u)\bigr)=1,
\]
and it leaves the degree of \(Q\) unchanged,
\[
\deg Q(a\widetilde u)=\deg Q.
\]
The minimal denominator therefore has the same degree for both
Jordan chains. If \(\cR_{\cX}\equiv0\), both recurrence degrees are
zero by definition.
\end{proof}

\begin{proposition}
\label{prop:minimal-recurrence}
Let \(d_{\cX}\) be the recurrence degree of a rational distinguished
restriction. Then, beyond the finite inhomogeneous part determined by
the numerator, its Taylor coefficients satisfy a homogeneous linear
recurrence with constant coefficients of order \(d_{\cX}\).
Moreover, \(d_{\cX}\) is the smallest possible order of any eventual
homogeneous linear recurrence with constant coefficients satisfied by
the same Taylor coefficient sequence.
\end{proposition}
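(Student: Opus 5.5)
The first claim follows by applying Theorem~\ref{thm:rational-recurrence} to the reduced representation of Definition~\ref{def:minimal-denominator}. Write
\[
\cR_{\cX}(u)=u^2\frac{P(u)}{Q(u)},\qquad \gcd(P,Q)=1,\qquad Q(0)\neq0,\qquad \deg Q=d_{\cX}.
\]
Then for every $n>\deg P$ the sequence $(c_n)=(g_{n+2})$ satisfies
\[
q_0c_n+q_1c_{n-1}+\cdots+q_{d_{\cX}}c_{n-d_{\cX}}=0,
\]
with $q_0\neq0$ and $q_{d_{\cX}}\neq0$. This is an eventual homogeneous recurrence of order $d_{\cX}$. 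If $\cR_{\cX}\equiv0$, or if $Q$ is constant, the sequence is eventually zero and the order-zero statement is trivial.

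For minimality, I would translate recurrences into generating functions. Suppose $(c_n)$ satisfies, for all $n\ge N$, a homogeneous recurrence
\[
\alpha_0c_n+\alpha_1c_{n-1}+\cdots+\alpha_kc_{n-k}=0
\]
with constant coefficients, where not all $\alpha_i$ vanish. Stripping leading zero coefficients, and shifting $N$ if necessary, I may assume $\alpha_0\neq0$. Set $S(u)=\alpha_0+\alpha_1u+\cdots+\alpha_ku^k$, so $\deg S\le k$ and $S(0)\neq0$. The coefficient of $u^n$ in $S(u)\sum c_nu^n$ is exactly the left side of the recurrence for $n\ge\max(N,k)$. Hence $S(u)\,P(u)/Q(u)=T(u)$ for some polynomial $T$, giving $S P=T Q$ as polynomials.

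Since $\gcd(P,Q)=1$, it follows that $Q\mid S$, and therefore $k\ge\deg S\ge\deg Q=d_{\cX}$. This proves minimality.

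The main obstacle is the normalization of an arbitrary recurrence. An eventual recurrence written with trailing or leading zero coefficients, for instance $\alpha_0=0$, must be reindexed so that its characteristic polynomial $S$ has $S(0)\neq0$. This reindexing lowers the order, so any lower bound proved for the normalized recurrence still bounds the original order $k$ from below. Once this is done, the divisibility argument is a direct consequence of coprimality, and the case $\cR_{\cX}\equiv0$ is handled separately by the convention $d_{\cX}=0$.
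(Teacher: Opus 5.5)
Your proposal is correct and follows essentially the same route as the paper. Both pass to the generating function, multiply by the characteristic polynomial of the assumed recurrence to obtain a polynomial, and then use coprimality of $P$ and $Q$ to conclude $d_{\cX}\le s$; you phrase that last step as $Q\mid S$, while the paper phrases it as uniqueness of the reduced denominator. The normalization $\alpha_0\neq0$ that you call the main obstacle is harmless but not needed: $SP=TQ$ with $S\neq0$ and $\gcd(P,Q)=1$ already forces $Q\mid S$, whether or not $S(0)\neq0$.
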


\begin{proof}
The first statement follows from
Theorem~\ref{thm:rational-recurrence} applied to the minimal
denominator \(Q\). For minimality, let
\[
C(u):=\sum_{n\ge0}c_nu^n
=\frac{P(u)}{Q(u)},
\qquad
\gcd(P,Q)=1,
\qquad
\deg Q=d_{\cX},
\]
and suppose that, from some index \(N\) onward, the sequence
\(c_n=g_{n+2}\) satisfies a homogeneous linear recurrence with
constant coefficients of order \(s\):
\[
\widetilde q_0c_n+\widetilde q_1c_{n-1}
+\cdots+\widetilde q_sc_{n-s}=0,
\qquad n\ge N,
\]
with
\[
\widetilde q_0\neq0,
\qquad
\widetilde q_s\neq0.
\]
Set
\[
\widetilde Q(u)
=
\widetilde q_0+\widetilde q_1u+\cdots+\widetilde q_su^s.
\]
The coefficient of \(u^n\) in
\(\widetilde Q(u)C(u)\) is precisely
\[
\widetilde q_0c_n+\widetilde q_1c_{n-1}
+\cdots+\widetilde q_sc_{n-s},
\]
which vanishes for every \(n\ge N\) by hypothesis, so all
coefficients of sufficiently high degree in the product vanish, and
therefore
\[
\widetilde Q(u)C(u)=S(u)
\]
for some polynomial \(S\). Consequently
\[
C(u)=\frac{S(u)}{\widetilde Q(u)}.
\]
After cancellation of common factors, this rational representation
has denominator of degree at most \(s\). Since \(P/Q\) is the reduced
representation of \(C\), uniqueness of the reduced denominator up to
a nonzero constant gives
\[
d_{\cX}=\deg Q\le s.
\]
No eventual homogeneous linear recurrence with constant coefficients
of order smaller than \(d_{\cX}\) can therefore be satisfied by the
same Taylor coefficient sequence.
\end{proof}

\section{Relation with smooth nilpotent normal forms}
\label{sec:normalforms}

The preceding sections concern the Taylor coefficients of the
distinguished restriction and the algebraic relations among them.
Since these coefficients are extracted from the linearly reduced
vector field, it is important to distinguish this information from
that obtained after nonlinear normal form transformations.

To make this distinction explicit, write the Taylor expansion of the
nilpotent system as
\begin{equation}
\label{eq:general-taylor-system}
\begin{aligned}
\dot u
=
v+
\sum_{j+k\geq2}
\frac{a_{jk}}{j!\,k!}u^jv^k,\qquad
\dot v
=
\sum_{j+k\geq2}
\frac{b_{jk}}{j!\,k!}u^jv^k.
\end{aligned}
\end{equation}
Then the distinguished restriction is
\begin{equation}
\label{eq:R-bjk}
\mathcal R_{\mathcal X}(u)
=
\mathcal G(u,0)
=
\sum_{j\geq2}\frac{b_{j0}}{j!}u^j.
\end{equation}
Consequently,
\begin{equation}
\label{eq:gj-bj0}
g_j=\frac{b_{j0}}{j!},
\qquad j\geq2.
\end{equation}
Thus the Taylor coefficients of the distinguished restriction are
exactly those associated with the pure monomials \(u^j\) in the
second component of the linearly reduced vector field. The
restriction contains no information about the coefficients
\(a_{jk}\) of the first component or about the mixed coefficients
\(b_{jk}\) with \(k\geq1\).

This loss of information is relevant when the system is subsequently
reduced to a smooth nilpotent normal form. The normal form procedure
uses nonlinear near--identity changes of phase coordinates, and,
depending on the chosen normalization, possibly a time
reparametrization. At each homogeneous degree the transformation is
determined through the corresponding homological equation. As a
result, coefficients that are separated in
\eqref{eq:general-taylor-system} can contribute jointly to the
coefficients that remain in the normal form; see
Kuznetsov~\cite{Kuznetsov2005}. In particular, mixed monomials that
vanish identically when \(v=0\) is imposed need not disappear from
the normal form computation.

The quadratic order already illustrates the distinction. By
Proposition~\ref{prop:g2-intrinsic},
\[
g_2
=
-\frac12 D_{q_0}\bigl(\det D\mathcal X\bigr)(0)
\]
coincides with the quadratic Bogdanov--Takens coefficient usually
denoted by \(a\). The second quadratic coefficient, however, is
\[
b
=
D_{q_0}\bigl(\operatorname{tr}D\mathcal X\bigr)(0)
=
\mathcal F_{uu}(0,0)+\mathcal G_{uv}(0,0),
\]
and therefore depends on information that is absent from
\(\mathcal R_{\mathcal X}\). Thus even at quadratic order the
restriction captures only part of the data entering the
Bogdanov--Takens normal form.

The distinction becomes more significant at higher orders. The
Taylor coefficients $g_3,g_4,\ldots$
record the complete sequence associated with the pure monomials in
the second component before nonlinear normalization. The rational
structure developed in Section~\ref{sec:rational} may therefore
impose exact relations on this entire sequence, including finite
recurrences and simultaneous cancellations. Such relations are
properties of the distinguished restriction and need not imply
corresponding cancellations among the coefficients of a smooth
nilpotent normal form, because the latter may also receive
contributions from the terms omitted by the restriction.

This point is particularly important when the distinguished
restriction becomes exactly quadratic. If
\[
\mathcal R_{\mathcal X}(u)=g_2u^2,
\]
then $b_{j0}=0,$ $j\geq3,$
so all Taylor coefficients of the restriction of degree at least
three vanish. Nevertheless, no corresponding conclusion follows for
the mixed coefficients \(b_{jk}\), \(k\geq1\), or for the
coefficients \(a_{jk}\) in the first component. Hence exact
quadraticity of the distinguished restriction is an all-orders
statement about a specific part of the linearly reduced vector field,
but it should not by itself be interpreted as an all-orders
degeneracy of the smooth nilpotent normal form.

\begin{remark}
\label{rem:pure-versus-normal}
The distinguished restriction and the smooth nilpotent normal form
therefore describe different levels of the local structure. The
restriction isolates the Taylor coefficients associated with the
pure \(u\)-monomials in the second component and retains any exact
algebraic relations among them. The normal form calculation uses the
full two-dimensional Taylor expansion and may combine pure and mixed
terms through the homological equations. Accordingly, the
distinguished restriction is best viewed as complementary to the
normal form analysis rather than as a substitute for it.
\end{remark}

\section{Applications}
\label{sec:applications}

The results of the preceding sections are now applied to four planar
models with nilpotent singularities. The examples exhibit different
behaviors of the minimal denominator of the distinguished restriction.
In the mosquito model of Huang et al. \cite{HuangRuanYuZhang2019}, the generic
recurrence degree is two and drops to one on an explicit algebraic
locus. In two modified Leslie--Gower models, the distinguished
restriction has a linear denominator, but the corresponding
cancellation mechanism behaves differently in the two cases. Finally,
a Bazykin-type model considered by Castellanos and Chan--L\'opez~\cite{CastellanosChanLopez2026}
provides a direct comparison between the intrinsic quadratic
Bogdanov--Takens coefficients and the higher order information
contained in the distinguished restriction.

\subsection{A mosquito population model: a quadratic denominator}
\label{ssec:mosquito}

We first consider the mosquito population model studied by
Huang et al.~\cite{HuangRuanYuZhang2019},
\begin{equation}
\label{eq:mosquito-model}
\begin{aligned}
\dot w
&=
\left[
\frac{w}{1+w+g}
-\bigl(\mu_1+\xi_1(w+g)\bigr)
\right]w,\\
\dot g
&=
\frac{bw}{1+w}
-\bigl(\mu_2+\xi_2(w+g)\bigr)g.
\end{aligned}
\end{equation}
All parameters are positive.

For the nilpotent cusp, the analysis in
\cite{HuangRuanYuZhang2019} is carried out under $g^*=w^*$,
together with the double-zero conditions
\[
\operatorname{tr}J(E^*)=0,
\qquad
\det J(E^*)=0.
\]
These relations determine
\(\mu_1,\mu_2,\xi_1,\xi_2\) in terms of \(b\) and \(w^*\).
For later use, write \(s=w^*>0\) and set
\[
D_M:=b(2+s)(1+2s)+s(1+s)^2.
\]
Then the equilibrium and double-zero conditions give
\begin{equation}
\label{eq:mosquito-doublezero-parameters}
\begin{aligned}
\xi_1&=
\frac{s\bigl[(1+s)^3-b(2+s)(1+2s)\bigr]}
{(1+2s)^2D_M},\\[1mm]
\xi_2&=
\frac{b\bigl[s(1+s)-b(2+s)(1+2s)\bigr]}
{s(1+s)D_M},\\[1mm]
\mu_1&=
\frac{s\bigl[b(2+s)(1+2s)(1+4s)-s(1+s)^2\bigr]}
{(1+2s)^2D_M},\\[1mm]
\mu_2&=
\frac{b\bigl[3b(2+s)(1+2s)+s(s^2-1)\bigr]}
{(1+s)D_M}.
\end{aligned}
\end{equation}
These formulas are exactly the relations obtained in
\cite{HuangRuanYuZhang2019} from the equilibrium equations together
with \(\operatorname{tr}J=\det J=0\).

The linear transformation used in
\cite{HuangRuanYuZhang2019} can be written as
\begin{equation}
\label{eq:mosquito-change}
w=w^*-Au,
\qquad
g=w^*-Bu+v,
\end{equation}
where
\begin{equation}
\label{eq:mosquito-AB}
\begin{aligned}
A&=
\frac{(w^*)^2(1+w^*)^2}
{(1+2w^*)\bigl[b(2+w^*)(1+2w^*)+w^*(1+w^*)^2\bigr]},\\
B&=
\frac{bw^*(2+w^*)}
{b(2+w^*)(1+2w^*)+w^*(1+w^*)^2}.
\end{aligned}
\end{equation}
In these coordinates the linear part is the canonical nilpotent
block.

Write system~\eqref{eq:mosquito-model} as
\[
\dot w=f(w,g),
\qquad
\dot g=h(w,g).
\]
From~\eqref{eq:mosquito-change},
\[
\dot w=-A\dot u,
\qquad
\dot g=-B\dot u+\dot v.
\]
Hence
\[
\dot u=-\frac{1}{A}f(w,g),
\]
and therefore
\begin{equation}
\label{eq:mosquito-vdot}
\dot v
=
h(w,g)-\frac{B}{A}f(w,g).
\end{equation}
Restricting to the distinguished eigendirection \(v=0\) yields the
exact scalar restriction
\begin{equation}
\label{eq:mosquito-restriction}
\mathcal R_M(u)
=
h(w^*-Au,w^*-Bu)
-\frac{B}{A}
f(w^*-Au,w^*-Bu).
\end{equation}

The following calculation aims at more than a
recurrence of order at most two: we determine the minimal denominator
of the exact restriction and identify explicitly the parameter locus
on which its degree drops.

\begin{proposition}
\label{prop:mosquito-rational}
On the double zero parameter set, the distinguished restriction of
the mosquito model has the form
\begin{equation}
\label{eq:mosquito-rational-form}
\mathcal R_M(u)
=
u^2
\frac{P_2(u)}
{\bigl(1+w^*-Au\bigr)
 \bigl(1+2w^*-(A+B)u\bigr)},
\end{equation}
where \(P_2\) is a polynomial of degree at most two.
\end{proposition}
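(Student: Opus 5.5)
We substitute \(w=w^*-Au\), \(g=w^*-Bu\) directly into~\eqref{eq:mosquito-restriction} and track denominators. The only rational pieces of the vector field are \(w/(1+w+g)\) in \(f\) and \(bw/(1+w)\) in \(h\). Along \(v=0\) these become
\[
1+w+g=1+2w^*-(A+B)u,
\qquad
1+w=1+w^*-Au,
\]
so both denominators are linear in \(u\), and every other term is polynomial in \(u\). Concretely, \(f(w,g)\) is a rational function with denominator \(1+2w^*-(A+B)u\) and numerator of degree at most three. Likewise \(h(w,g)\) has denominator \(1+w^*-Au\) and numerator of degree at most three. Putting everything over the common denominator
\[
Q(u)=\bigl(1+w^*-Au\bigr)\bigl(1+2w^*-(A+B)u\bigr)
\]
gives \(\mathcal R_M(u)=N(u)/Q(u)\) with \(\deg N\le 4\). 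Indeed, \(h\cdot Q\) and \(f\cdot Q\) are each a cubic numerator multiplied by one linear factor.

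The remaining task is to show that \(N\) is divisible by \(u^2\). Then \(P_2:=N/u^2\) has degree at most two, which is exactly the claimed form~\eqref{eq:mosquito-rational-form}. For this we use Proposition~\ref{prop:pure-coefficients} rather than computing: in the Jordan coordinates~\eqref{eq:mosquito-change} the linear part is the canonical nilpotent block, so \(\mathcal R_M(0)=\mathcal R_M'(0)=0\). Since \(Q(0)=(1+w^*)(1+2w^*)\neq0\), the function \(\mathcal R_M=N/Q\) is analytic at \(0\). From \(N=Q\mathcal R_M\) we get \(N(0)=0\), and then \(N'(0)=Q(0)\mathcal R_M'(0)+Q'(0)\mathcal R_M(0)=0\). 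Hence \(u^2\mid N\).

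One point requires care: we must confirm that~\eqref{eq:mosquito-change} really produces the form~\eqref{eq:nilpotent-system}, namely \(\dot u=v+\cdots\) and \(\dot v=\mathcal O(\|(u,v)\|^2)\). The paper takes this from \cite{HuangRuanYuZhang2019}. Moreover, \(\dot v=h-\tfrac{B}{A}f\) is exactly the second component in those coordinates, so \(\mathcal R_M(u)=\mathcal G(u,0)\) by Proposition~\ref{prop:scalar-jordan}.

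The step I expect to be the main obstacle is not the degree bound, which is immediate, but the vanishing of the constant and linear terms of \(N\). This relies on the double-zero relations~\eqref{eq:mosquito-doublezero-parameters} and on the specific values of \(A,B\) in~\eqref{eq:mosquito-AB}. If one wanted an independent check, one would expand to first order in \(u\):
\begin{itemize}
\item \(N(0)\) vanishes because \(E^*\) is an equilibrium;
\item \(N'(0)\) vanishes because \((-A,-B)\) spans \(\ker J(E^*)\) and \(J(E^*)(0,1)^T\) is proportional to \((-A,-B)^T\) with the right normalization.
\end{itemize}
This is a finite symbolic verification, but citing Proposition~\ref{prop:pure-coefficients} avoids it.

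Note that the proposition only asserts the form with \(\deg P_2\le2\), not that this representation is reduced. Possible common factors with \(Q\), and hence drops in the recurrence degree, are left to the subsequent analysis.
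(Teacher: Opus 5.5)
Your argument is correct and is essentially the paper's proof: you clear the two linear denominators \(1+w^*-Au\) and \(1+2w^*-(A+B)u\) to get a numerator of degree at most four, then use \(\mathcal R_M(0)=\mathcal R_M'(0)=0\) from Proposition~\ref{prop:pure-coefficients} to extract the factor \(u^2\). Your observation that \(Q(0)=(1+w^*)(1+2w^*)\neq0\), so that \(N=Q\mathcal R_M\) forces \(N(0)=N'(0)=0\), spells out a step the paper states in one line; in your optional direct check, only \(J(E^*)(-A,-B)^{T}=0\) is actually needed for \(N'(0)=0\).
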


\begin{proof}
Along \(v=0\),
\[
w=w^*-Au,
\qquad
g=w^*-Bu.
\]
Therefore the two rational denominators appearing in
system~\eqref{eq:mosquito-model} reduce to
\[
1+w=1+w^*-Au
\]
and
\[
1+w+g
=
1+2w^*-(A+B)u.
\]
All remaining terms are polynomial of degree at most two in \(u\).
Consequently, after taking a common denominator in
\eqref{eq:mosquito-restriction}, the numerator has degree at most
four.

Since the coordinates~\eqref{eq:mosquito-change} put the
linearization in canonical nilpotent form, Proposition
\ref{prop:pure-coefficients} gives
\[
\mathcal R_M(0)=\mathcal R_M'(0)=0.
\]
Hence the numerator contains the factor \(u^2\), and the remaining
factor is a polynomial of degree at most two. This proves
\eqref{eq:mosquito-rational-form}.
\end{proof}

\begin{corollary}
\label{cor:mosquito-recurrence}
Let
\[
\mathcal R_M(u)
=
\sum_{k\ge2}g_k u^k.
\]
Then, after the finitely many coefficients determined by the
quadratic numerator \(P_2\), the Taylor coefficients satisfy the
second-order recurrence
\begin{equation}
\label{eq:mosquito-recurrence}
q_0g_k+q_1g_{k-1}+q_2g_{k-2}=0
\end{equation}
for all sufficiently large \(k\), where
\[
q_0=(1+w^*)(1+2w^*),
\]
\[
q_1=
-\left[
A(1+2w^*)+(A+B)(1+w^*)
\right],
\]
and
\[
q_2=A(A+B).
\]
Thus the recurrence degree is at most two.
\end{corollary}

\begin{proposition}
\label{prop:mosquito-minimal-denominator}
Set \(s=w^*>0\) and define
\begin{equation}
\label{eq:mosquito-Psi}
\Psi(s,b)
:=
s^2(1+s)-b(2+s)(1+2s).
\end{equation}
On the biologically feasible double-zero parameter set of
Huang et al.  \cite{HuangRuanYuZhang2019}, the recurrence degree of the distinguished
restriction satisfies
\[
d_M=2\quad\text{if }\Psi(s,b)\neq0,
\]
whereas
\[
d_M=1\quad\text{if }\Psi(s,b)=0.
\]
In particular, the quadratic recurrence is generically minimal, but
its order drops by one on the algebraic locus \(\Psi=0\).
\end{proposition}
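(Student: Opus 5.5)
The plan is to compute the reduced denominator by locating the poles of $\mathcal R_M$ and checking, through residues, whether each one survives. By Proposition~\ref{prop:mosquito-rational} the only candidate poles are
\[
u_1=\frac{1+s}{A},
\qquad
u_2=\frac{1+2s}{A+B}.
\]
Since $P_2$ has degree at most two, $d_M\le2$. The degree drops below two exactly when $P_2$ shares a factor with the denominator, which can happen in two ways. Either $P_2$ vanishes at a simple pole, or the two linear factors coincide and the resulting double pole is not genuine. So the proof reduces to checking, in each case, whether $\mathcal R_M$ really has a pole of the expected order. I would do this directly on formula~\eqref{eq:mosquito-restriction}, not by expanding $P_2$.

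First take the case $u_1\neq u_2$. Along $v=0$ the factor $1+w$ appears only in the term $bw/(1+w)$ of $h$. At $u=u_1$ we have $w=-1$, so this term has a simple pole with residue proportional to $-b\neq0$. All other terms are regular there, because $1+w+g\neq0$ when $u_1\ne u_2$, so $u_1$ is a genuine pole. The factor $1+w+g$ appears only in the term $-\frac BA\,w^2/(1+w+g)$ coming from $f$. Its residue at $u_2$ is proportional to $B\,w(u_2)^2$, which is nonzero provided $w(u_2)=s-Au_2\neq0$. I would verify this last condition using~\eqref{eq:mosquito-AB}, together with $A,B>0$ on the feasible set. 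Both poles then survive, and $d_M=2$.

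Next, identify the coalescence condition $u_1=u_2$. It is equivalent to $(1+s)(A+B)=A(1+2s)$, that is, $B(1+s)=As$. Substituting~\eqref{eq:mosquito-AB}, the common factor $D_M$ cancels and the condition becomes $b(2+s)(1+2s)=s^2(1+s)$, which is exactly $\Psi(s,b)=0$. On this locus the denominator is proportional to $(1+s-Au)^2$. However, $1+w+g$ is then a constant multiple of $1+w$ along $v=0$, since both vanish at the same point and both are linear in $u$. Hence each term of $\mathcal R_M$ has at most a simple pole at $u_1$, and one factor cancels, giving $d_M\le1$.

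The main obstacle is showing that $d_M$ does not drop further to $0$ on $\Psi=0$. For this the combined residue at $u_1$ must be nonzero. At $u_1$ we have $w=-1$ and $g=0$, so the residue is a combination of $-b$ and a multiple of $\frac BA$ coming from the $f$-term. I would compute it explicitly on $\Psi=0$, using $B(1+s)=As$ to eliminate $B$ and the formulas~\eqref{eq:mosquito-doublezero-parameters} as needed. I expect the result to factor as a product of strictly positive quantities on the feasible set. If that fails, I would fall back on a contradiction argument: if $\mathcal R_M/u^2$ were a polynomial of degree at most one, it would contradict the explicit nonzero values of $g_2$ and $g_3$.
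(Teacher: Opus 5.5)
Your residue strategy is a genuinely different execution from the paper's. The paper evaluates the numerator $P_2$ at the two roots $(1+s)/A$ and $(1+2s)/(A+B)$, obtaining values proportional to $b\Psi$ and $b\Psi(\xi_1^*)^2$ with $\xi_1^*=(1+s)^3-b(2+s)(1+2s)$. On $\Psi=0$ it then writes out the reduced fraction and checks a resultant. You instead work with the residues of the two singular terms $bw/(1+w)$ and $-\frac{B}{A}w^2/(1+w+g)$. This is equivalent: the factor $\Psi$ in the paper's two values is the other linear denominator factor evaluated at the pole, multiplying your residue. It is also more transparent.

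In particular your coalescent case goes through. At $u_1$ one has $w=-1$, $g=0$, and the residue is
\[
\frac1A\Bigl(b+\frac{B}{A+B}\Bigr)>0,
\]
which on $\Psi=0$ equals $(1+2s)(s^2+2s+2)/((1+s)(2+s))$, matching the paper's reduced formula. Your fallback is misphrased but unnecessary. Since $\deg P_2\le2$, total cancellation would force $\mathcal R_M=cu^2$, so it is $g_3\neq0$ that gives the contradiction. Nonzero $g_2,g_3$ do not contradict a linear $\mathcal R_M/u^2$.

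The step that fails as you justify it is $w(u_2)\neq0$ in the case $\Psi\neq0$. It does not follow from $A,B>0$, which hold for every $b,s>0$. From \eqref{eq:mosquito-AB} one gets $A+B=s/(1+2s)$, hence $u_2=(1+2s)^2/s$ and
\[
w(u_2)=s-Au_2=-\frac{s\,\xi_1^*}{D_M}=-(1+2s)^2\,\xi_1 .
\]
So the residue at $u_2$ vanishes exactly when $\xi_1=0$, i.e.\ when $b(2+s)(1+2s)=(1+s)^3$. At such parameter values $A,B>0$ and $\Psi=-(1+s)(1+2s)\neq0$. Yet the pole at $u_2$ is removable and $d_M=1$.

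Thus the conclusion $d_M=2$ for $\Psi\neq0$ genuinely requires the feasibility condition $\xi_1>0$ from \eqref{eq:mosquito-doublezero-parameters}. This is the same ingredient $\xi_1^*>0$ the paper uses; it enters its value of $P_2$ at $(1+2s)/(A+B)$ squared. Once you invoke $\xi_1>0$ in place of $A,B>0$, your argument is complete.
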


\begin{proof}
Let
\[
D_1(u)=1+s-Au,
\qquad
D_2(u)=1+2s-(A+B)u.
\]
By Proposition~\ref{prop:mosquito-rational},
\[
\mathcal R_M(u)=u^2\frac{P_2(u)}{D_1(u)D_2(u)}.
\]
Substituting the explicit double-zero relations
\eqref{eq:mosquito-doublezero-parameters} into the exact restriction
\eqref{eq:mosquito-restriction}, collecting over the denominator
\(D_1D_2\), and evaluating the resulting quadratic numerator gives
\[
P_2\!\left(\frac{1+s}{A}\right)
=
-\frac{b s^3(1+s)\,\Psi(s,b)}
{(1+2s)^2
 [\,b(2+s)(1+2s)+s(1+s)^2\,]^2},
\]
and
\[
P_2\!\left(\frac{1+2s}{A+B}\right)
=
\frac{b s^3(2+s)\,\Psi(s,b)\,[\xi_1^*(s,b)]^2}
{(1+s)(1+2s)^3
 [\,b(2+s)(1+2s)+s(1+s)^2\,]^3},
\]
where
\[
\xi_1^*(s,b)
=
(1+s)^3-b(2+s)(1+2s).
\]
On the biologically feasible parameter set one has
\(b>0\), \(s>0\), and \(\xi_1^*(s,b)>0\). Hence, if
\(\Psi(s,b)\neq0\), neither denominator factor divides \(P_2\), so
\(D_1D_2\) is already the minimal denominator and \(d_M=2\).

If \(\Psi(s,b)=0\), then
\[
\frac{1+s}{A}
=
\frac{1+2s}{A+B},
\]
so the two linear factors have the same zero and are proportional.
Substitution of
\[
b=
\frac{s^2(1+s)}
{(2+s)(1+2s)}
\]
into the exact restriction and cancellation of the common factor
gives
\[
\mathcal R_M(u)
=
\frac{s^4u^2
\left[
4s^4+4s^3+s^2-(s^3+s^2+2s+2)u
\right]}
{(1+s)(2+s)(1+2s)^5
\left[(1+2s)^2-su\right]}.
\]
The remaining numerator and denominator are coprime; indeed their
resultant with respect to \(u\), up to a nonzero constant factor, is
\[
-s^4(1+s)(2+s)(1+2s)^7(s^2+2s+2),
\]
which is nonzero for \(s>0\). Therefore the reduced denominator is
linear and \(d_M=1\).
\end{proof}

This example shows that the number of rational factors visible before
cancellation gives only an upper bound for the recurrence degree. For
the mosquito model the generic value is two, but an exact algebraic
relation makes the two denominator factors proportional and produces
a cancellation that lowers the recurrence degree to one. This
provides a mechanism, distinct from the complete cancellation in the
linear-denominator examples below, by which the Taylor coefficient
sequence of the distinguished restriction simplifies before nonlinear
normal form transformations are performed.

\begin{remark}
This example does not aim to reproduce the higher-codimension
normal form analysis of Huang et al.~\cite{HuangRuanYuZhang2019};
it shows instead that, prior to nonlinear normalization, the exact
restriction to the distinguished eigendirection already organizes its
entire Taylor coefficient sequence through a finite recurrence. In
this case the two rational denominators of the original vector field
lead generically to a minimal second-order recurrence, with an
explicit exceptional locus on which the recurrence degree drops to
one.
\end{remark}

\subsection{A modified Leslie--Gower model with Beverton--Holt-type response}
\label{ssec:zhao}
We next consider the modified Leslie--Gower model studied by
Zhao and Zhao~\cite{ZhaoZhao2026},
\begin{equation}
\label{eq:zhao-model}
\begin{aligned}
\dot x
&=
x(1-x)-\frac{\alpha x}{x+\beta}-rxy,\\
\dot y
&=
\frac{\delta y}{1+\eta y}
\left(
1-\frac{y}{x+c}
\right).
\end{aligned}
\end{equation}
A positive equilibrium has the form $E=(x_0,x_0+c).$
On the double-zero parameter set,
\begin{equation}
\label{eq:zhao-doublezero}
\beta=
\frac{1-cr-2(r+1)x_0}{r+1},
\qquad
\delta=
rx_0\bigl(1+\eta(x_0+c)\bigr),
\end{equation}
and
\[
\alpha=
\frac{\bigl(cr+(r+1)x_0-1\bigr)^2}{r+1}.
\]

Under these conditions,
\[
J(E)
=
rx_0
\begin{pmatrix}
1&-1\\
1&-1
\end{pmatrix},
\]
so that a Jordan chain is
\[
q_0=
\begin{pmatrix}
1\\1
\end{pmatrix},
\qquad
q_1=
\begin{pmatrix}
\dfrac{1}{rx_0}\\[1mm]0
\end{pmatrix}.
\]
The associated Jordan coordinates are
\begin{equation}
\label{eq:zhao-jordan}
x=x_0+u+\frac{v}{rx_0},
\qquad
y=x_0+c+u.
\end{equation}

Define
\[
\Delta:=cr+(r+1)x_0-1,
\qquad
\Theta:=cr+2(r+1)x_0-1.
\]
Then
\[
\Theta=-(r+1)\beta.
\]
Along the distinguished eigendirection \(v=0\), the exact restriction
is
\begin{equation}
\label{eq:zhao-restriction}
\mathcal R_Z(u)
=
\frac{r(r+1)^2x_0\,u^2(x_0+u)}
{\Delta-(r+1)u}.
\end{equation}

\begin{proposition}
\label{prop:zhao-application}
Assume \(\Delta\neq0\). The distinguished restriction
\eqref{eq:zhao-restriction} has recurrence degree
\[
d_Z=
\begin{cases}
1, & \Theta\neq0,\\
0, & \Theta=0.
\end{cases}
\]
Moreover, if
\[
\mathcal R_Z(u)=\sum_{k\ge2}g_k u^k,
\]
then
\[
g_2=
\frac{r(r+1)^2x_0^2}{\Delta},
\]
and, for every \(k\ge3\),
\begin{equation}
\label{eq:zhao-gk}
g_k=
\frac{r(r+1)^{k-1}x_0\,\Theta}
{\Delta^{k-1}}.
\end{equation}
Hence all Taylor coefficients of the distinguished restriction of
degree at least three share the factor \(\Theta\).
\end{proposition}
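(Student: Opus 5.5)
The plan is to match \eqref{eq:zhao-restriction} with the linear-denominator template of Corollary~\ref{cor:linear-denominator} and read off everything from that corollary and Corollary~\ref{cor:all-orders-cancellation}. Write
\[
\mathcal R_Z(u)=C u^2\frac{a+bu}{d-eu},\qquad
C=r(r+1)^2x_0,\quad a=x_0,\quad b=1,\quad d=\Delta,\quad e=r+1.
\]
The hypothesis \(\Delta\neq0\) gives \(d\neq0\). We need \(x_0\neq0\), which holds at a positive equilibrium, and \(r\neq0\), which holds because \(r>0\) is a model parameter; hence \(C\neq0\). Corollary~\ref{cor:linear-denominator} then gives \(g_2=Ca/d=r(r+1)^2x_0^2/\Delta\) immediately.

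For \(k\ge3\), Corollary~\ref{cor:linear-denominator} gives
\[
g_k=C\,\frac{e^{k-3}}{d^{k-1}}\,\Sigma,\qquad
\Sigma=ae+bd=(r+1)x_0+\Delta .
\]
The one identity to check is \(\Sigma=\Theta\). By the definitions of \(\Delta\) and \(\Theta\),
\[
(r+1)x_0+cr+(r+1)x_0-1=cr+2(r+1)x_0-1=\Theta .
\]
Substituting gives \(g_k=r(r+1)^2x_0\,(r+1)^{k-3}\Theta/\Delta^{k-1}=r(r+1)^{k-1}x_0\Theta/\Delta^{k-1}\), which is \eqref{eq:zhao-gk}. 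The common factor \(\Theta\) is then evident.

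For the recurrence degree, consider the two cases. If \(\Theta=0\), Corollary~\ref{cor:all-orders-cancellation} gives \(\mathcal R_Z(u)=g_2u^2\). Then \(\mathcal R_Z/u^2\) is constant and \(d_Z=0\) by Definition~\ref{def:minimal-denominator}; this covers \(g_2=0\) as well, although here \(g_2\neq0\). If \(\Theta\neq0\), I need \(\gcd(x_0+u,\ \Delta-(r+1)u)=1\). Both polynomials are linear (since \(r+1\neq0\)), so a common factor would mean a common root. The root of \(x_0+u\) is \(u=-x_0\), and at that point the denominator equals \(\Delta+(r+1)x_0=\Theta\neq0\). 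So the representation is already reduced, with a linear denominator, and \(d_Z=1\).

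Nothing here is a real obstacle. The only care needed is the nondegeneracy bookkeeping: \(C\neq0\) and \(r+1\neq0\), so that the degree of the denominator is genuinely one, and recognizing that the cancellation factor \(\Sigma\) is exactly \(\Theta\), equivalently \(-(r+1)\beta\).
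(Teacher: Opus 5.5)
Your proposal is correct and follows the paper's proof essentially verbatim. You instantiate Corollary~\ref{cor:linear-denominator} with $C=r(r+1)^2x_0$, $a=x_0$, $b=1$, $d=\Delta$, $e=r+1$, identify $\Sigma=\Theta$, and split on whether $\Theta$ vanishes to get the recurrence degree. Two of your steps spell out what the paper leaves implicit: the check that $C\neq0$, which uses $r>0$ and $x_0>0$, and the observation that the denominator evaluated at $u=-x_0$ equals $\Theta$, which justifies the coprimality claim the paper states without detail.
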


\begin{proof}
Equation~\eqref{eq:zhao-restriction} is a particular case of
Corollary~\ref{cor:linear-denominator}. The corresponding parameters
are
\[
C=r(r+1)^2x_0,
\qquad
a=x_0,
\qquad
b=1,
\]
and
\[
d=\Delta,
\qquad
e=r+1.
\]
Therefore
\[
ae+bd
=
(r+1)x_0+\Delta
=
\Theta,
\]
and~\eqref{eq:zhao-gk} follows. If \(\Theta\neq0\), the linear
denominator is coprime with the numerator and therefore the minimal
denominator has degree one. If \(\Theta=0\), the factor
\(\Delta-(r+1)u\) cancels exactly against \(x_0+u\), so the reduced
restriction is a polynomial and its recurrence degree is zero.
\end{proof}

\begin{corollary}
\label{cor:zhao-exact-cancellation}
On the algebraic locus \(\beta=0\), equivalently \(\Theta=0\),
\[
g_k=0,
\qquad k\ge3,
\]
and the distinguished restriction becomes exactly quadratic:
\begin{equation}
\label{eq:zhao-quadratic}
\mathcal R_Z(u)
=
-r(r+1)x_0u^2.
\end{equation}
\end{corollary}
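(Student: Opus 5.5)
The plan is to read this off from Proposition~\ref{prop:zhao-application} together with Corollary~\ref{cor:all-orders-cancellation}, with no new expansion.

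First, I would check that the two descriptions of the locus agree and that the standing hypothesis \(\Delta\neq0\) holds on it. Since \(\Theta=-(r+1)\beta\) and \(r+1\neq0\) for the positive parameters of the model, \(\beta=0\) is equivalent to \(\Theta=0\). From the definitions of \(\Delta\) and \(\Theta\),
\[
\Theta=\Delta+(r+1)x_0 .
\]
Hence \(\Theta=0\) forces
\[
\Delta=-(r+1)x_0 .
\]
This is nonzero because \(x_0>0\) at a positive equilibrium and \(r>0\). So the restriction~\eqref{eq:zhao-restriction} is well defined near \(u=0\), and Proposition~\ref{prop:zhao-application} applies.

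Second, the vanishing \(g_k=0\) for \(k\ge3\) follows directly from~\eqref{eq:zhao-gk}, since every such coefficient carries the factor \(\Theta\). Equivalently, with the identification \(C=r(r+1)^2x_0\), \(a=x_0\), \(b=1\), \(d=\Delta\), \(e=r+1\), one has \(\Sigma=\Theta\). Then Corollary~\ref{cor:all-orders-cancellation} (1 \(\Rightarrow\) 3) gives
\[
\cR_Z(u)=\frac{Ca}{d}\,u^2=g_2u^2 .
\]

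Third, I would compute the coefficient explicitly by substituting \(\Delta=-(r+1)x_0\) into the formula for \(g_2\):
\[
g_2=\frac{r(r+1)^2x_0^2}{-(r+1)x_0}=-r(r+1)x_0 .
\]
This yields~\eqref{eq:zhao-quadratic}. As a direct check, the denominator becomes
\[
\Delta-(r+1)u=-(r+1)(x_0+u),
\]
which cancels the numerator factor \(x_0+u\) exactly.

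There is no genuine obstacle here. The only point needing care is confirming \(\Delta\neq0\) on the locus, so that the hypotheses of Proposition~\ref{prop:zhao-application} are not vacuous there; the identity \(\Delta=-(r+1)x_0\) settles this.
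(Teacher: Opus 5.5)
Your proposal is correct and follows essentially the paper's route. The paper treats the corollary as an immediate consequence of Proposition~\ref{prop:zhao-application}, where every $g_k$ with $k\ge3$ carries the factor $\Theta$, together with the cancellation mechanism of Corollary~\ref{cor:all-orders-cancellation}. As in your argument, $\Delta=-(r+1)x_0$ on the locus, so $\Delta-(r+1)u=-(r+1)(x_0+u)$ cancels against $x_0+u$ and gives $g_2=-r(r+1)x_0$. Your extra check that $\Delta\neq0$ on $\Theta=0$ is a worthwhile detail the paper leaves implicit. Strictly, it needs only $x_0\neq0$ and $r+1\neq0$, which are already required for the Jordan chain and for the formula for $\beta$; it does not need the positivity of $r$ that you invoke.
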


Thus the Zhao--Zhao restriction exhibits the complete cancellation
described in Corollary~\ref{cor:all-orders-cancellation}: a linear
minimal denominator disappears and the recurrence degree drops from
one to zero. Since \(\Theta=-(r+1)\beta\), the cancellation locus is
equivalently described by \(\beta=0\). Its relation with the
biologically admissible parameter region should therefore be
distinguished from the purely algebraic cancellation mechanism
identified here.

\subsection{A modified Leslie--Gower model with additive Allee effect} \label{ssec:wang}
In this section  consider the modified Leslie--Gower predator--prey model
studied by Wang et al.~\cite{WangWuZou2026},
\begin{equation}
\label{eq:wang-model}
\begin{aligned}
\dot x
&=
x(1-x)-\gamma xy-\frac{\beta x}{x+\alpha},\\
\dot y
&=
\delta y
\left(
1-\frac{y}{x+\eta}
\right),
\end{aligned}
\end{equation}
where all parameters are positive.

The nilpotent positive equilibrium considered in
\cite{WangWuZou2026} is
\begin{equation}
\label{eq:wang-equilibrium}
E^*=(x_0,x_0+\eta),
\qquad
x_0=
\frac{1-\alpha-\alpha\gamma-\gamma\eta}
{2(1+\gamma)},
\end{equation}
on the parameter set
\begin{equation}
\label{eq:wang-doublezero}
\beta=(1+\gamma)(x_0+\alpha)^2,
\qquad
\delta=\gamma x_0.
\end{equation}

After translating \(E^*\) to the origin, the linear transformation
used in~\cite{WangWuZou2026} can equivalently be written as
\begin{equation}
\label{eq:wang-change}
x=x_0+u+\frac{v}{\gamma x_0},
\qquad
y=x_0+\eta+u.
\end{equation}
In these coordinates the linearization is the canonical nilpotent
block.

Along the distinguished eigendirection \(v=0\),
\begin{equation}
\label{eq:wang-line}
x=x_0+u,
\qquad
y=x_0+\eta+u=x+\eta.
\end{equation}
Hence the second component of~\eqref{eq:wang-model} vanishes
identically,
\[
\delta y
\left(
1-\frac{y}{x+\eta}
\right)
\equiv0.
\]
Moreover,
\[
v=\gamma x_0
\bigl[(x-x_0)-(y-x_0-\eta)\bigr],
\]
and therefore
\[
\dot v=\gamma x_0(\dot x-\dot y).
\]
It follows that
\begin{equation}
\label{eq:wang-restriction-pre}
\mathcal R_W(u)
=
\gamma x_0 H(x_0+u),
\end{equation}
where
\[
H(x)
=
x\left[
1-\gamma\eta-(1+\gamma)x
-\frac{\beta}{x+\alpha}
\right].
\]

\begin{proposition}
\label{prop:wang-restriction}
On the double-zero parameter set~\eqref{eq:wang-doublezero}, the
distinguished restriction is
\begin{equation}
\label{eq:wang-restriction}
\mathcal R_W(u)
=
-\gamma(1+\gamma)x_0
\frac{u^2(x_0+u)}
{x_0+\alpha+u}.
\end{equation}
\end{proposition}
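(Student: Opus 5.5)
Starting from \eqref{eq:wang-restriction-pre}, it remains to substitute $x=x_0+u$ into $H$ and use the double-zero relations \eqref{eq:wang-equilibrium}--\eqref{eq:wang-doublezero} to simplify. The plan is to write the bracket in $H$ over the common denominator $x+\alpha$:
\[
H(x)=\frac{x\,N(x)}{x+\alpha},
\qquad
N(x):=\bigl(1-\gamma\eta-(1+\gamma)x\bigr)(x+\alpha)-\beta .
\]
Here $N$ is a quadratic polynomial in $x$ with leading coefficient $-(1+\gamma)$. The key claim is that $N$ has a double root at $x=x_0$, so that
\[
N(x)=-(1+\gamma)(x-x_0)^2 .
\]

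To verify this, I would check $N(x_0)=0$ and $N'(x_0)=0$. From \eqref{eq:wang-equilibrium},
\[
1-\gamma\eta=2(1+\gamma)x_0+\alpha(1+\gamma),
\]
so that
\[
1-\gamma\eta-(1+\gamma)x_0=(1+\gamma)(x_0+\alpha).
\]
Then
\[
N(x_0)=(1+\gamma)(x_0+\alpha)^2-\beta=0
\]
by \eqref{eq:wang-doublezero}. Next, using the same identity,
\[
N'(x_0)=1-\gamma\eta-(1+\gamma)x_0-(1+\gamma)(x_0+\alpha)=0 .
\]
Since $N$ is quadratic with leading coefficient $-(1+\gamma)$ and has a double root at $x_0$, the claimed factorization follows. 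This double-root identification is the only real obstacle in the argument. It is consistent with Proposition~\ref{prop:pure-coefficients}, which forces $\mathcal R_W(0)=\mathcal R_W'(0)=0$, so the factor $u^2$ must appear. I would note this as a cross-check, but rely on the direct computation.

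Finally, I substitute $x=x_0+u$:
\[
H(x_0+u)=-\frac{(1+\gamma)(x_0+u)\,u^2}{x_0+\alpha+u}.
\]
Multiplying by $\gamma x_0$ as in \eqref{eq:wang-restriction-pre} gives \eqref{eq:wang-restriction}. The restriction is well defined near $u=0$ because $x_0+\alpha>0$ on the admissible positive equilibrium.
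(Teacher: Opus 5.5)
Your proof is correct and follows the paper's route: both substitute $\beta=(1+\gamma)(x_0+\alpha)^2$ and $1-\gamma\eta=(1+\gamma)(2x_0+\alpha)$ into $H(x_0+u)$ and simplify. Your double-root check $N(x_0)=N'(x_0)=0$ is a tidy way of organizing the algebra that the paper compresses into ``substitution gives.''
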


\begin{proof}
The double-zero relations imply
\[
\beta=(1+\gamma)(x_0+\alpha)^2
\]
and
\[
1-\gamma\eta
=
(1+\gamma)(2x_0+\alpha).
\]
Substitution into \(H(x_0+u)\) gives
\[
H(x_0+u)
=
-(1+\gamma)
\frac{u^2(x_0+u)}
{x_0+\alpha+u}.
\]
Equation~\eqref{eq:wang-restriction} follows from
\eqref{eq:wang-restriction-pre}.
\end{proof}

\begin{corollary}
\label{cor:wang-coefficients}
In the admissible parameter region \(\alpha>0\), the Wang--Wu--Zou
restriction has recurrence degree $d_W=1.$
If
\[
\mathcal R_W(u)=\sum_{k\ge2}g_k u^k,
\]
then
\[
g_2
=
-\frac{\gamma(1+\gamma)x_0^2}
{x_0+\alpha},
\]
and, for every \(k\ge3\),
\begin{equation}
\label{eq:wang-gk}
g_k=(-1)^{k-2}
\frac{\gamma(1+\gamma)x_0\alpha}
{(x_0+\alpha)^{k-1}}.
\end{equation}
\end{corollary}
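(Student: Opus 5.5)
We cast \eqref{eq:wang-restriction} in the form of Corollary~\ref{cor:linear-denominator} and then check coprimality directly. Writing the denominator as \(x_0+\alpha+u=d-eu\), we take
\[
C=-\gamma(1+\gamma)x_0,\qquad a=x_0,\qquad b=1,\qquad d=x_0+\alpha,\qquad e=-1.
\]
Here \(Cd\neq0\) because \(\gamma>0\), \(x_0>0\) at the positive equilibrium, and \(\alpha>0\). The coefficient formula \eqref{eq:g2-linear} immediately gives
\[
g_2=\frac{Ca}{d}=-\frac{\gamma(1+\gamma)x_0^2}{x_0+\alpha}.
\]

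For \(k\ge3\) we first compute the cancellation factor:
\[
\Sigma=ae+bd=-x_0+x_0+\alpha=\alpha .
\]
Substituting into \eqref{eq:gk-linear} yields
\[
g_k=C\,\frac{(-1)^{k-3}\alpha}{(x_0+\alpha)^{k-1}} .
\]
Absorbing the minus sign of \(C\) turns \((-1)^{k-3}\) into \((-1)^{k-2}\), which is exactly \eqref{eq:wang-gk}. The only care needed in this step is the sign bookkeeping; no real difficulty arises.

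For the recurrence degree, we note that the numerator of \(\mathcal R_W/u^2\) is \(P(u)=-\gamma(1+\gamma)x_0(x_0+u)\) and the denominator is \(Q(u)=x_0+\alpha+u\). These are two linear polynomials with roots \(-x_0\) and \(-x_0-\alpha\). The roots are distinct precisely when \(\alpha\neq0\), so for \(\alpha>0\) we have \(\gcd(P,Q)=1\). Hence \(Q\) is the minimal denominator in the sense of Definition~\ref{def:minimal-denominator}, and \(d_W=\deg Q=1\).

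Equivalently, by Corollary~\ref{cor:all-orders-cancellation}, exact cancellation occurs only when \(\Sigma=\alpha=0\), which is excluded in the admissible region. Moreover \(\mathcal R_W\not\equiv0\), since \(g_2\neq0\), so the zero-restriction convention in Definition~\ref{def:minimal-denominator} does not apply. The main point to state carefully is therefore the coprimality check, which reduces to \(\alpha\neq0\).
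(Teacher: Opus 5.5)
Your proposal is correct and follows the paper's own argument: you make the same identification $C=-\gamma(1+\gamma)x_0$, $a=x_0$, $b=1$, $d=x_0+\alpha$, $e=-1$ in Corollary~\ref{cor:linear-denominator}, and you use $\Sigma=\alpha\neq0$ to exclude cancellation. Your direct root comparison showing $\gcd(P,Q)=1$, together with the check $Cd\neq0$, makes explicit two points that the paper leaves implicit when it passes from $\Sigma\neq0$ to minimality of the denominator.
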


\begin{proof}
Equation~\eqref{eq:wang-restriction} is a particular case of
Corollary~\ref{cor:linear-denominator}, with
\[
C=-\gamma(1+\gamma)x_0,
\qquad
a=x_0,
\qquad
b=1,
\]
and
\[
d=x_0+\alpha,
\qquad
e=-1.
\]
Thus
\[
\Sigma=ae+bd
=
-x_0+(x_0+\alpha)
=
\alpha.
\]
Since \(\alpha>0\), the numerator and the linear denominator cannot
have the cancellation characterized in
Corollary~\ref{cor:all-orders-cancellation}. Hence the denominator
remains minimal of degree one, and the coefficient formula follows.
\end{proof}

\begin{remark}
\label{rem:wang-no-cancellation}
For the Wang et al.  model the factor controlling the possible
linear-denominator cancellation is \(\Sigma=\alpha\). Since
\(\alpha>0\) in the admissible parameter region, the Taylor
coefficients of the distinguished restriction of degree at least
three cannot vanish simultaneously. Thus the higher-codimension
nilpotent degeneracies identified in~\cite{WangWuZou2026} are not
produced by an all orders cancellation in the distinguished
restriction. This illustrates the distinction between the exact
restriction and the coefficients obtained after nonlinear normal form
transformations.
\end{remark}

\subsection{A Bazykin model revisited: comparison with the intrinsic BT coefficients}
\label{ssec:bazykin-comparison}
To make the relation with the intrinsic approach of Castellanos and
Chan--L\'opez~\cite{CastellanosChanLopez2026} explicit, we revisit one
of the Bazykin-type systems analyzed in that work. The model, its
Bogdanov--Takens curve, and the quadratic coefficients quoted below
are due to Castellanos and Chan--L\'opez \cite{CastellanosChanLopez2026}. Here we take a different
route: starting from their Bogdanov--Takens configuration, we
apply the distinguished restriction developed in the present paper
to determine its exact rational structure and recurrence degree.

Consider the mutual-interference model
\begin{equation}
\label{eq:bazykin-mutual}
\dot x=x-\frac{xy}{(1+\alpha x)(1+\beta y)},
\qquad
\dot y=-\gamma y+\frac{xy}{(1+\alpha x)(1+\beta y)},
\end{equation}
with \(\alpha,\beta,\gamma>0\). Castellanos and Chan--L\'opez show
that its Bogdanov--Takens set is parametrized by
\begin{equation}
\label{eq:bazykin-bt-curve}
x_0=(1+\gamma)^2,
\qquad
y_0=\frac{(1+\gamma)^2}{\gamma},
\qquad
\alpha=\frac{\gamma}{(1+\gamma)^2},
\qquad
\beta=\frac{1}{(1+\gamma)^2},
\end{equation}
and, for their normalization of the kernel direction, the intrinsic
quadratic coefficients are
\begin{equation}
\label{eq:bazykin-ab}
a(\gamma)=\frac{\gamma^3}{(1+\gamma)^4},
\qquad
b(\gamma)=
-\frac{(\gamma-1)\gamma^2}{(1+\gamma)^4}.
\end{equation}
In particular, \(b=0\) only at \(\gamma=1\), whereas \(a>0\) for all
\(\gamma>0\).

\begin{proposition}
\label{prop:bazykin-restriction}
Along the Bogdanov--Takens curve~\eqref{eq:bazykin-bt-curve}, choose
\[
q_0=(\gamma,1)^\top,
\qquad
q_1=(1+\gamma,0)^\top.
\]
Then the distinguished restriction of~\eqref{eq:bazykin-mutual} is
\begin{equation}
\label{eq:bazykin-restriction}
\mathcal R_B(u)=
\frac{\gamma^3u^2\bigl((1+\gamma)^2+\gamma u\bigr)}
{\bigl((1+\gamma)^3+\gamma u\bigr)
 \bigl((1+\gamma)^3+\gamma^2u\bigr)}.
\end{equation}
For every \(\gamma>0\) the numerator is coprime with the denominator,
and hence $d_B=2.$
Moreover,
\[
g_2=
\frac{\gamma^3}{(1+\gamma)^4}
=
a(\gamma),
\]
in agreement with the intrinsic coefficient in
\eqref{eq:bazykin-ab}.
\end{proposition}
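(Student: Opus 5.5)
The plan is to verify the Jordan chain, write the restriction with Proposition~\ref{prop:scalar-jordan}, simplify it to \eqref{eq:bazykin-restriction}, and then check coprimality by evaluating the numerator at the roots of the denominator.

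\textbf{Step 1: the Jordan chain.} Substituting \eqref{eq:bazykin-bt-curve} into the Jacobian at $(x_0,y_0)$, I would check directly that $Aq_0=0$ and $Aq_1=q_0$. The second identity fixes the normalization of $q_0$ that is compatible with $q_1=(1+\gamma,0)^\top$.

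\textbf{Step 2: a formula for $\mathcal G(u,0)$.} Write $\mathcal X=\mathcal F\,q_0+\mathcal G\,q_1$ in the coordinates $z=z_0+uq_0+vq_1$. Since $q_1$ has vanishing second component, the second component of $\mathcal X$ equals $\mathcal F$. The first component is $\gamma\mathcal F+(1+\gamma)\mathcal G$. Hence
\[
\mathcal G=\frac{\mathcal X_1-\gamma\mathcal X_2}{1+\gamma}.
\]
Put $\varphi=xy/((1+\alpha x)(1+\beta y))$. Then $\mathcal X_1-\gamma\mathcal X_2=x+\gamma^2y-(1+\gamma)\varphi$, so
\[
\mathcal R_B(u)=\frac{x+\gamma^2y}{1+\gamma}-\varphi,
\]
evaluated on the line $x=x_0+\gamma u$, $y=y_0+u$.

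\textbf{Step 3: the denominators.} Using \eqref{eq:bazykin-bt-curve}, the two factors become
\[
1+\alpha x=\frac{(1+\gamma)^3+\gamma^2u}{(1+\gamma)^2},
\qquad
1+\beta y=\frac{(1+\gamma)^3+\gamma u}{\gamma(1+\gamma)^2}.
\]
These are exactly the two linear factors in \eqref{eq:bazykin-restriction}. I would then put $\mathcal R_B$ over this common denominator. The resulting numerator is a polynomial of degree at most three in $u$. By Proposition~\ref{prop:pure-coefficients}, $\mathcal R_B(0)=\mathcal R_B'(0)=0$, so $u^2$ divides the numerator and only a linear cofactor remains to be identified. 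Comparing two coefficients, say of $u^2$ and $u^3$, determines it as $\gamma^3\bigl((1+\gamma)^2+\gamma u\bigr)$. I expect this algebraic simplification to be the main obstacle; the safest route is to expand the numerator only modulo $u^4$ and rely on the vanishing of its constant and linear terms as a check on the bookkeeping.

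\textbf{Step 4: coprimality and the conclusions.} It suffices to evaluate the linear numerator factor $(1+\gamma)^2+\gamma u$ at the two denominator roots.
\begin{itemize}
\item At $u=-(1+\gamma)^3/\gamma$ it equals $(1+\gamma)^2-(1+\gamma)^3=-\gamma(1+\gamma)^2\neq0$.
\item At $u=-(1+\gamma)^3/\gamma^2$ it equals $-(1+\gamma)^2/\gamma\neq0$.
\end{itemize}
Neither denominator factor divides the numerator, for every $\gamma>0$. This includes $\gamma=1$, where the two factors coincide but the denominator is still a square of degree two. By Definition~\ref{def:minimal-denominator}, therefore, $d_B=2$. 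Finally, setting $u=0$ in $\mathcal R_B/u^2$ gives
\[
g_2=\frac{\gamma^3(1+\gamma)^2}{(1+\gamma)^6}=\frac{\gamma^3}{(1+\gamma)^4},
\]
which agrees with $a(\gamma)$ in \eqref{eq:bazykin-ab}, as Proposition~\ref{prop:g2-intrinsic} predicts.
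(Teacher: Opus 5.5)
Your proposal is correct and follows the paper's proof essentially step by step. Your formula $\mathcal G=(\mathcal X_1-\gamma\mathcal X_2)/(1+\gamma)$ is exactly the paper's projection with the second row of $P^{-1}$, and evaluating $(1+\gamma)^2+\gamma u$ at the two denominator roots is equivalent to the paper's comparison of $u_N$ with $u_1,u_2$. The simplification you expect to be the main obstacle is in fact immediate: on the line $xy=\bigl((1+\gamma)^2+\gamma u\bigr)^2/\gamma$ and $(x+\gamma^2y)/(1+\gamma)=(1+\gamma)^2+\gamma u$, while $\bigl((1+\gamma)^3+\gamma u\bigr)\bigl((1+\gamma)^3+\gamma^2u\bigr)-(1+\gamma)^4\bigl((1+\gamma)^2+\gamma u\bigr)=\gamma^3u^2$.
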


\begin{proof}
On~\eqref{eq:bazykin-bt-curve}, direct differentiation gives
\[
D\mathcal X(x_0,y_0)
=
\frac1{1+\gamma}
\begin{pmatrix}
\gamma&-\gamma^2\\
1&-\gamma
\end{pmatrix}.
\]
Thus
\[
D\mathcal X(x_0,y_0)q_0=0,
\qquad
D\mathcal X(x_0,y_0)q_1=q_0.
\]
With \(P=(q_0\ q_1)\), the second row of \(P^{-1}\) is
\[
\left(
\frac1{1+\gamma},
-\frac{\gamma}{1+\gamma}
\right).
\]
Substituting
\[
(x,y)=(x_0,y_0)+u q_0
\]
into the vector field and projecting with this row gives
\eqref{eq:bazykin-restriction} after simplification.

The nontrivial numerator factor has the zero
\[
u_N=-\frac{(1+\gamma)^2}{\gamma},
\]
whereas the two denominator factors have zeros
\[
u_1=-\frac{(1+\gamma)^3}{\gamma},
\qquad
u_2=-\frac{(1+\gamma)^3}{\gamma^2}.
\]
For \(\gamma>0\), the equality \(u_N=u_1\) would imply
\(1=1+\gamma\), while \(u_N=u_2\) would imply
\(\gamma=1+\gamma\). Both are impossible. Hence no denominator
factor cancels with the numerator, and the minimal denominator has
degree two. Notice that the two denominator factors themselves
coincide when \(\gamma=1\); this produces a repeated factor but does
not reduce the degree of the minimal denominator.

Finally, evaluation of the rational factor at \(u=0\) gives
\[
g_2
=
\gamma^3
\frac{(1+\gamma)^2}{(1+\gamma)^6}
=
\frac{\gamma^3}{(1+\gamma)^4},
\]
which agrees with~\eqref{eq:bazykin-ab}.
\end{proof}

\begin{remark}
\label{rem:bazykin-complementarity}
This example makes the complementarity of the two viewpoints
explicit. The coefficient \(g_2\) reproduces the intrinsic
Bogdanov--Takens coefficient \(a\) of Castellanos and
Chan--L\'opez \cite{CastellanosChanLopez2026}, while the exact restriction additionally determines
the complete Taylor coefficient sequence through a second-order
recurrence.

The point \(\gamma=1\) is particularly instructive. There, $b(1)=0,$
whereas
\[
\mathcal R_B(u)
=
\frac{u^2(u+4)}{(u+8)^2},
\qquad
d_B=2.
\]
Thus the two denominator factors coalesce into a repeated factor, but
no cancellation with the numerator occurs and the recurrence degree
does not drop. In particular, the degeneracy \(b=0\) of the second
quadratic Bogdanov--Takens coefficient does not imply a cancellation
in the distinguished restriction. This is consistent with
Section~\ref{sec:normalforms}: the coefficient \(b\) depends on mixed
Taylor information that is not contained in
\(\mathcal R_{\mathcal X}\) alone.
\end{remark}

\subsection{Comparison of the examples}
\label{ssec:comparison}

The preceding examples exhibit different ways in which the minimal
denominator of the distinguished restriction can behave.

In the mosquito model~\cite{HuangRuanYuZhang2019}, the distinguished
restriction has generically a minimal quadratic denominator, so its
Taylor coefficients satisfy a minimal second-order recurrence. On the
algebraic locus \(\Psi=0\), the two denominator factors become
proportional and a common factor cancels. The recurrence degree
therefore drops from two to one.

In the modified Leslie--Gower model~\cite{ZhaoZhao2026}, the minimal
denominator is linear. The Taylor coefficients of degree at least
three share the factor \(\Theta\), and on the algebraic locus
\(\Theta=0\) the numerator and denominator cancel exactly. The
restriction then becomes quadratic and the recurrence degree drops
from one to zero.

The second modified Leslie--Gower model~\cite{WangWuZou2026} also has
a linear minimal denominator, but the factor controlling the possible
cancellation is \(\Sigma=\alpha\). Since \(\alpha>0\) in the
admissible parameter region, no such cancellation occurs and the
recurrence degree remains equal to one.

The Bazykin model considered in
\cite{CastellanosChanLopez2026} exhibits a different phenomenon. Its
minimal denominator has degree two for every \(\gamma>0\). At
\(\gamma=1\), where the second intrinsic Bogdanov--Takens coefficient
satisfies \(b=0\), the two denominator factors coalesce into a repeated
factor, but no factor cancels with the numerator. Consequently,
\(d_B=2\) also at this degenerate point.

Taken together, the four examples distinguish several algebraic
mechanisms: cancellation after coalescence of denominator factors,
complete cancellation of a linear denominator, exclusion of
cancellation in an admissible parameter region, and coalescence
without cancellation. Thus the recurrence degree is determined by the
reduced exact restriction, rather than by the number of rational
factors visible before reduction or by the codimension of the
corresponding nilpotent normal form.

\section{Concluding remarks}
\label{sec:conclusions}

We have studied a one-dimensional object naturally associated with a
planar vector field whose linearization is a nonzero nilpotent matrix
of index two. The intrinsic map on the kernel of the linearization
admits, after choosing a generator of that kernel, a scalar
representation \(\mathcal R_{\mathcal X}\). In Jordan coordinates this
representation is precisely
\[
\mathcal R_{\mathcal X}(u)=\mathcal G(u,0),
\]
so its Taylor coefficients coincide with the coefficients associated
with the pure \(u\)-monomials in the second component of the linearly
reduced vector field.

The dependence of this scalar representation on the choice of Jordan
chain is completely described by the scaling law
\[
\widetilde{\mathcal R}_{\mathcal X}(\widetilde u)
=
\frac1a\mathcal R_{\mathcal X}(a\widetilde u).
\]
Accordingly, the individual Taylor coefficients depend on the
normalization of the kernel direction, whereas their vanishing pattern
and the restriction order do not. In the rational case, the degree of
the minimal denominator is likewise independent of the admissible
Jordan chain.

The quadratic Taylor coefficient occupies a special position. It
coincides with the intrinsic quadratic Bogdanov--Takens coefficient
characterized in~\cite{CastellanosChanLopez2026}. The remaining Taylor
coefficients, however, should be interpreted at the level at which
they are defined: they describe the linearly reduced vector field
modulo admissible changes of Jordan chain and are not claimed to be
invariants under arbitrary nonlinear phase-coordinate transformations.

For rational distinguished restrictions, the minimal denominator
provides a finite algebraic description of the complete Taylor
coefficient sequence. Its degree is the minimal order of an eventual
homogeneous linear recurrence with constant coefficients satisfied by
that sequence. Exact cancellations between numerator and denominator
therefore appear as changes in recurrence degree, while the
linear-denominator case gives an explicit criterion for simultaneous
vanishing of all Taylor coefficients of degree at least three.

The applications show that these recurrence properties capture
algebraic structure that is already present before nonlinear
normalization. They need not parallel the higher-codimension
classification obtained from a smooth nilpotent normal form, since
mixed terms that disappear under the distinguished restriction may
still contribute to normal form coefficients. The distinguished
restriction is therefore best viewed as complementary to normal form
theory: it isolates exact relations in the linearly reduced vector
field, including finite recurrence laws and all-orders cancellations,
without replacing the information contained in the full
two-dimensional normal form calculation.

\bigskip

\section*{Acknowledgments}
The authors thank Eduardo Chan--L\'opez for his careful reading of an
earlier version of the manuscript and for pointing out an intrinsic
formulation of the distinguished restriction on the nilpotent
eigendirection. His observation helped us clarify the role of the
Jordan chain and the transformation law of the scalar restriction.
\bigskip

\paragraph{Declaration on generative AI}
AI tools were used as an aid in organizing and refining the
presentation of the material, as well as for language editing and
assistance with bibliographic formatting and verification. All
references were independently verified by the authors. All
mathematical content, results, proofs, and computations (carried out
in Mathematica) are the authors' own, and the authors take full
responsibility for the final content of the manuscript.

\bigskip

\noindent{\bf Funding}  The second author is partially supported by the 2026 Special Program for Teaching and Research Project Funding at CBI UAM-Iztapalapa.
\medskip

\noindent{\bf Author Contributions} 
R. Albarran-García: Methodology, Formal analysis, Investigation,
Writing – original draft. M. Alvarez-Ramírez: Conceptualization, Methodology, Formal analysis,
Writing – review and editing, Supervision. M. P. García-Rivera: Formal analysis, Investigation,
Writing – review and editing.
\medskip

\noindent{\bf Data availability} The data that supports the findings of this study are available within the article.
\medskip

\noindent\textbf{Competing Interests}
The authors declare that they have no competing interests.


\end{document}